\newcommand{\R}{\mathbb{R}}
\newcommand{\subel}{\mathfrak{v}}
\newcommand{\faux}{f^{\mathrm{aux}}}
\newcommand{\sign}{\mathrm{sign}}
\newcommand{\xtemp}{x^\mathrm{temp}}
\theoremstyle{plain}
\newtheorem{theorem}{Theorem}[section]
\newtheorem{lemma}[theorem]{Lemma}
\newtheorem{proposition}[theorem]{Proposition}
\theoremstyle{definition}
\newtheorem{defn}{Definition}
\theoremstyle{remark}
\newtheorem{remark}{Remark}
\numberwithin{equation}{section}
\begin{document}
\title[A subgradient method for $\ell_1$-composite optimization]
{A subgradient method with constant step-size for $\ell_1$-composite optimization}

\author[A. Scagliotti and P. Colli Franzone]{Alessandro Scagliotti and Piero Colli Franzone}

\address[A.~Scagliotti]{Technical University of Munich (TUM) \& Munich Center for Machine Learning (MCML), Germany}
\email{scag@ma.tum.de}

\address[P.~Colli~Franzone]{Dipartimento di Matematica, Universit\`a di Pavia, Italy}
\email{piero.collifranzone@unipv.it }

\begin{abstract}
Subgradient methods are the natural extension to the non-smooth case of the classical gradient descent for regular convex optimization problems. However, in general, they are characterized by slow convergence rates, and they require decreasing step-sizes to converge.
In this paper we propose a subgradient method with constant step-size for composite convex objectives with $\ell_1$-regularization.
If the smooth term is strongly convex, we can establish a linear convergence result for the function values. This fact relies on an accurate choice of the element of the subdifferential used for the update, and on proper actions adopted when non-differentiability regions are crossed.
Then, we propose an accelerated version of the algorithm, based on conservative inertial dynamics and on an adaptive restart strategy, that is guaranteed to achieve a linear convergence rate in the strongly convex case.
Finally, we test the performances of our algorithms on some strongly and non-strongly convex examples.\\

\noindent
\textbf{Keywords:} convex optimization,
$\ell_1$-regularization, subgradient method, inertial acceleration,
restart strategies.

\end{abstract}

\maketitle

\section*{Introduction}
In this paper we deal with convex \textit{composite} optimization, i.e., we consider objective functions $f:\R^n\to \R$ of the form
\begin{equation} \label{eq:Int_comp}
f(x) = g(x) + h(x),
\end{equation}
where $g:\R^n\to\R$ is $C^1$-regular with Lipschitz-continuous gradient, and $h:\R^n\to\R \cup \{+\infty\}$ is a non-smooth convex function.
We recall that the concept of \textit{composite} function was introduced by Nesterov in \cite{N13}, and it usually denotes the splitting \eqref{eq:Int_comp} in the case that the non-regular term $h$ is \textit{simple}. In this framework, possible examples of \textit{simple} functions include, e.g., the indicator of a closed convex set, or the supremum of a finite family of linear functions. The problem of minimizing such composite functions can be effectively addressed by means of \textit{forward-backward} methods (see, e.g., \cite{CW05}), and their accelerated versions \cite{BT}. In this regard, we report the recent contribution \cite{ReCa}, where it is considered an accelerated method that achieves linear convergence when $g,h$ in \eqref{eq:Int_comp} are strongly convex.

The aim of this paper is to develop a convergent subgradient method with constant step-size for the minimization of particular instances of \eqref{eq:Int_comp}.
The subgradient method was first introduced in \cite{Sh68} and, given an initial guess $x^0\in \R^n$, the algorithm produces a sequence $(x^k)_{k\geq0}$ with update rule
\begin{equation} \label{eq:Int_subg_method}
x^{k+1} = x^k - h_k\subel^k
\quad k\geq0,
\end{equation}
where $\subel^k\in \partial f(x^k)$, i.e., it is an element taken from the subdifferential of the objective at the point $x^k$, and $h_k>0$ denotes the step-size.
If we set $\nu_k = h_k|\subel^k|_2$, we can equivalently rephrase \eqref{eq:Int_subg_method} as
\begin{equation} \label{eq:Int_subg_steplength}
x^{k+1} = x^k - \nu_k\frac{\subel^k}{|\subel^k|_2},
\end{equation}
where $\nu_k$ represents the \textit{step-length} at the $k$-th iteration. 
It is possible to deduce the convergence $\lim_{k\to\infty}f(x^k) = f(x^*)$ as soon as $(\nu_k)_{k\geq0}$ satisfies $\lim_{k\to\infty}\nu_k=0$ and $\sum_{k=1}^\infty \nu_k=\infty$ (see \cite[Chapter~2]{Sh98}). In \cite[Theorem~5.2]{Po87} it is proposed a construction for $(\nu_k)_{k\geq 1}$ that achieves $f(x^k)-f(x^*) = o(1/\sqrt{k})$ as $k\to\infty$ when the value $f(x^*)$ is known a priori. We insist on the fact that, in the results mentioned above, the vector $\subel^k$ can be \textit{any} element of $\partial f(x^k)$.  
If we now consider \textit{constant} step-sizes, i.e.,  $h_k=h>0$ for every $k\geq 0$, in general we cannot expect the convergence of the iterates of \eqref{eq:Int_subg_method} to a minimizer. For instance, given the one-dimensional function $f:x\mapsto |x|$, for every choice $h>0$, if the initial guess $x_0\not\in \{ rh: r\in \mathbb{Z} \}$, then the sequence produced by \eqref{eq:Int_subg_method} oscillates and it remains well-separated from $0$. From this example it is clear that, in order to work out a convergent subgradient method with constant step-size, it is crucial to identify the regions where the objective $f$ is non-differentiable, and to take proper actions when the sequence $(x^k)_{k\geq0}$ crosses them. 
Moreover, in our analysis a role of primary importance is played by the choice of the element $\subel^k \in \partial f(x^k)$ used for the iteration. \\
Subgradient methods with constant step-size have already been considered in the convex optimization literature, and, typically, it is possible to prove that the iterates arrive to the sublevel set $\{ x\in \R^n: f(x) \leq \inf f + c \}$, where the quantity $c>0$ is related to the step-size $h>0$. In a similar flavor, if the objective function is strongly convex, the sequence produced by the algorithm manages to reach a ball centered at the minimizer, whose radius depends on $h$. For a presentations of these results, we refer the reader to \cite[Section~3.2]{Ber15}. 
Moreover, under suitable assumptions on the growth of $f$ around the minimizer $x^*$, it is possible to prove that the distance of the iterates to $x^*$ has a linear decay, up to a certain threshold (that, once again, is estimated in function of $h$). For further details, see \cite[Theorem~1]{JM20} and \cite[Theorem~4.3]{DDMP18}.
Finally, we report the recent contribution of \cite{JL23}, where the authors study the stability of a subgradient method with constant step-size around local minimizers, when $f$ is non-smooth and non-convex.
To the best of our knowledge, the one presented here are the first convergence results for a subgradient method with constant step-size.

In this paper, we devote our attention to the case where the non-regular term at the right-hand side of \eqref{eq:Int_comp} consists in the $\ell_1$-penalization, i.e., where we have $h(x)=\gamma |x|_1=\gamma\sum_{i=1}^n|x_i|$ with $\gamma>0$, and
\begin{equation*}
f(x) = g(x) + \gamma |x|_1.
\end{equation*}
This kind of problem is well-studied since the presence of the $\ell_1$-norm induces sparsity in the minimizer, and for this reason such minimization tasks easily arise in real-world applications.
For instance, we recall \cite{CRT08} for signal processing applications, {\cite{YW10} for imaging problems, and finally \cite{ED,VBL} for the $\ell_1$-regularized logistic regression, which is widely used in machine learning, computer vision, data mining, bioinformatics and neural signal processing.}

In our approach, we take advantage of the structure of the points where the objective $f$ is non-differentiable. We recall that, in the case of $\ell_1$-penalization, such points coincide with the set $\bigcup_{i=1}^n\{ x\in \R^n : x_i=0 \}$. Hence, at each iteration, if the current value $x^k$ has some null component, i.e., $x^k\in \bigcap_{i\in \beta_{x^k}}\{ x\in \R^n: x_i=0 \}$ for some $\beta_{x^k}\subset\{ 1,\ldots,n \}$, we first decide which hyperplanes $\{ x\in \R^n: x_i=0 \}$ $i\in \beta_{x^k}$ we move parallel to. This choice is authomatically done by selecting for the update \eqref{eq:Int_subg_method} the direction $\subel^k = \partial^-f(x^k)\in \partial f(x^k)$, where $\partial^-f(x^k)$ denotes the element of $\partial f(x^k)$ with minimal Euclidean norm. 
The interesting situation occurs when some components \textit{strictly} change sign when moving from $x^k$ to $x^{k} - h\partial^-f(x^k)$. In that case, we have to properly decide whether to allow (some of) these changes of sign, or to set the corresponding components equal to 0. We stress the fact that this phase is fundamental in order to avoid the oscillations that characterized the one-dimensional example reported above.
For this method, described in Algorithm~\ref{Alg_subgr_l1}, we can establish a linear convergence result as soon as the regular function $g:\R^n\to\R$ appearing at the right-hand side of \eqref{eq:Int_comp} is \textit{strongly convex}. 
To show that, we make use of a non-smooth version of the Polyak-Lojasiewicz inequality (see, e.g., \cite{BNP17, XY13}).\\
Then, in Section~\ref{sec:acc_l1}, we propose a momentum-based acceleration of Algorithm~\ref{Alg_subgr_l1}, inspired by the \textit{restarted-conservative algorithm} introduced in \cite{ScCF22}. In the smooth convex framework, the idea of introducing momentum to accelerate the convergence of the classical gradient method dates back to the 1960s, with the works of Polyak \cite{P63,P64}. These methods, often called \textit{heavy-ball}, can be interpreted as discretization of a second order damped mechanical system, where the objective function plays the role of the potential energy.
In \cite{SBC} it was shown that also the celebrated Nesterov accelerated gradient method (see \cite{N83}) can be interpreted in this framework.
This led to a renewed interest in the interplay between discrete-time optimization algorithms and continuous-time dynamical models. In this context, in the mechanical system, the classical linear and isotropic viscosity friction is often replaced by a more general dissipative term. In this regard, we recall the contributions \cite{A16,A18,SJ19}. From the discrete-time side, in \cite{OC} the authors empirically observed that adaptively resetting to $0$ the momentum variable (i.e., the velocity) can further boost the convergence.
Motivated by this fact, in \cite{ScCF22} it was considered a \textit{conservative} dynamical model (i.e., without any dissipative term in the dynamics), whose convergence \textit{completely} relies on a proper restart scheme. In Algorithm~\ref{Alg_acc_subgr_l1} we propose for composite functions with $\ell_1$-penalization a new version of the \textit{restarted-conservative} algorithm that has been heuristically outlined in \cite{ScCF22}, and in Section~\ref{sec:acc_l1} we show that the per-iteration decay achieved by Algorithm~\ref{Alg_acc_subgr_l1} is always larger or equal than in Algorithm~\ref{Alg_subgr_l1}. \\
Finally, in Section~\ref{sec:num_exp} we test our algorithms in strongly and non-strongly convex optimization problems with $\ell_1$-regularization.

\section{Preliminary results} \label{sec:prel_res}
In this section we establish some auxiliary results that will be used later.
Given a convex function $f:\R^n\to\R$, for every $x\in\R^n$ we denote with $\partial f(x)\subset\R^n$ the subdifferential of $f$ at the point $x$. We recall that
\begin{equation*}
\partial f(x):=\{
y\in\R^n\mid f(z) \geq f(x) 
+ \langle y, z-x\rangle, \,\,
\forall z \in \R^n
\}.
\end{equation*}

\begin{defn} \label{def:min_norm_subd}
Let $f:\R^n\to\R$ be a convex function. 
For every  $x\in \R^n$, we define the vector
$\partial^-f(x) \in \R^n$ as follows 
\begin{equation} \label{eq:def_min_norm_subd}
\partial^-f(x) := \arg \min\{ |y|_2 \mid y\in \partial f(x) \}.
\end{equation}
\end{defn}

\begin{remark}
We observe that Definition~\ref{def:min_norm_subd} is always well-posed. Indeed, for every convex function $f:\R^n\to\R$, for every $x\in \R^n$ the subdifferential $\partial f(x)$ is a non-empty, compact and convex subset of $\R^n$. Namely, since we do not allow $f$ to assume the value $+\infty$, this fact descends directly from \cite[Theorem~3.1.15]{N18}. Moreover, we can equivalently rephrase \eqref{eq:def_min_norm_subd} as
\begin{equation*}
\partial^-f(x) := \arg \min\{ |y|_2^2 \mid y\in \partial f(x) \},
\end{equation*}
i.e., as a positive-definite quadratic programming problem on a convex domain. Hence, we deduce that $\partial^-f(x)$ is well-defined, and that it consists of a single element. Considering this last fact, in this paper we understand $\partial^- f:\R^n\to\R^n$ as a vector-valued operator, rather than a set-valued mapping.
\end{remark}

We report below a non-smooth version of the celebrated Polyak-$\L$ojasiewicz inequality.
We refer the reader to \cite{P63} and \cite[Theorem~2.1.10]{N18} for the classical statement in the smooth case, and to \cite[Section~2.3]{BNP17} and \cite[Section~2.2]{XY13} for the extension to non-differentiable functions.

\begin{lemma} \label{lem:non_smooth_PL}
Let $f:\R^n\to\R$ be a $\mu$-strongly convex function,
and let $x^*$ be its minimizer.
Then, for every $x\in \R^n$ and for every
element of the subdifferential $y\in \partial f(x)$
the following inequality holds:
\begin{equation*}
f(x)-f(x^*) \leq \frac1{2\mu} |y|_2^2,
\end{equation*}
and, in particular, we have
\begin{equation} \label{eq:Pol_subd_min}
f(x)-f(x^*) \leq \frac1{2\mu} |\partial^-f(x)|_2^2.
\end{equation}
\end{lemma}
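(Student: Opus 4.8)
The plan is to derive the inequality directly from the defining property of strong convexity, evaluated at the minimizer and then optimized over the displacement. The single key fact I would invoke is that $\mu$-strong convexity of $f$ is equivalent to the quadratic-strengthened subgradient inequality: for every $x,z\in\R^n$ and every $y\in\partial f(x)$,
\begin{equation*}
f(z) \geq f(x) + \langle y, z-x\rangle + \frac{\mu}{2}|z-x|_2^2.
\end{equation*}
This follows because $f-\tfrac{\mu}{2}|\cdot|_2^2$ is convex, so the ordinary subgradient inequality for the convex part reassembles into the displayed bound.

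The core step is to specialize $z=x^*$ and then lower-bound the right-hand side uniformly. Writing $v:=x^*-x$, the inequality reads $f(x^*)\geq f(x)+\langle y,v\rangle+\tfrac{\mu}{2}|v|_2^2$. I would observe that the scalar map $v\mapsto \langle y,v\rangle+\tfrac{\mu}{2}|v|_2^2$ is a positive-definite quadratic, minimized at $v=-y/\mu$, where it attains the value $-\tfrac{1}{2\mu}|y|_2^2$; this is just completing the square. Hence
\begin{equation*}
f(x^*) \geq f(x) + \langle y,v\rangle + \frac{\mu}{2}|v|_2^2 \geq f(x) - \frac{1}{2\mu}|y|_2^2,
\end{equation*}
and rearranging yields $f(x)-f(x^*)\leq \tfrac{1}{2\mu}|y|_2^2$, which is the first claim.

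For the second assertion I would note that the inequality just proved holds for \emph{every} $y\in\partial f(x)$, so in particular it holds for the minimal-norm element $y=\partial^-f(x)$ introduced in Definition~\ref{def:min_norm_subd}, giving \eqref{eq:Pol_subd_min} immediately. I do not expect a genuine obstacle here: the argument is short, and the only point requiring a little care is ensuring the correct quadratic form of the strong-convexity inequality and that the completion of the square is carried out with the sign that lower-bounds the right-hand side. The choice of evaluating at $z=x^*$ (rather than at a generic point) and then optimizing over the displacement is the one conceptual move, and it reduces the statement to a one-line scalar estimate.
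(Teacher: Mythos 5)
Your proposal is correct and follows essentially the same route as the paper: the paper's auxiliary function $\psi(x)=f(x)-\tfrac{\mu}{2}|x-x^*|_2^2$ together with the subdifferential sum rule is exactly your quadratic-strengthened subgradient inequality (just centered at $x^*$), and both arguments conclude by evaluating at $z=x^*$ and completing the square to bound $\langle y,x^*-x\rangle+\tfrac{\mu}{2}|x^*-x|_2^2\geq-\tfrac{1}{2\mu}|y|_2^2$. The specialization to $y=\partial^-f(x)$ for \eqref{eq:Pol_subd_min} is likewise identical.
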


\begin{proof}
Let us introduce the auxiliary 
function $\psi:\R^n\to\R$ defined as
\begin{equation*}
\psi(x):= f(x)- \frac\mu2 |x-x^*|^2_2.
\end{equation*}
The fact that $f$ is $\mu$-strongly convex guarantees that $\psi$ is still a convex function.
Moreover, for every $x\in \R^n$ we have that
\begin{equation} \label{eq:subd_diff}
\partial \psi(x) = \partial f(x) - \mu(x-x^*).
\end{equation} 
This follows immediately from the fact that
$f(x) = \psi(x) +\frac\mu2 |x -x^*|_2^2$ for every $x\in \R^n$, and from the 
\textit{sum rule for subdifferentials} 
(see, e.g., \cite[Theorem~23.8]{R97}), i.e., 
$\partial f(x) = \partial \psi(x) + \mu(x-x^*)$.
For every $x\in \R^n$ and for every
$y\in \partial f(x)$ we compute
\begin{equation} \label{eq:comput_loj_ineq}
\begin{split}
\psi(x^*) &\geq  \psi(x)  +
\langle y -\mu(x-x^*), x^*-x \rangle
  = f(x) +\frac\mu2 |x-x^*|^2_2 + \langle y , x^*-x \rangle\\
 & \geq f(x) -\frac{1}{2\mu}|y|_2^2,
\end{split}
\end{equation} 
where we used \eqref{eq:subd_diff} and the subdifferential inequality for the convex function $\psi$.
Recalling that $\psi(x^*) = f(x^*)$, 
from \eqref{eq:comput_loj_ineq} we directly deduce
the thesis. 
\end{proof}

We now introduce the class of functions that will be the main object of our investigation. We consider a \textit{composite} objective (see \cite{N13}) $f:\R^n\to \R$ of the form
\begin{equation}\label{eq:compos_l1}
f(x) = g(x) + \gamma |x|_1,
\end{equation}
where $g:\R^n\to\R$ is a $C^1$-regular convex function with Lipschitz-continuous gradient of constant $L>0$, and where $\gamma>0$ is a positive constant. We recall that $|x|_1:=\sum_{i=1}^n|x_i|$ for every $x\in\R^n$.
We observe that
\begin{equation} \label{eq:subd_comput_l1}
\partial f(x) = \left\{ 
\nabla g(x) +\gamma \sum_{i=1}^n\nu_i e_i\mid 
\nu_i = \sign (x_i) \mbox{ if } x_i\neq0,\,\,
\nu_i\in [-1,1] \mbox{ if } x_i=0
\right\}
\end{equation}
for every $x\in \R^n$, where $e_i$ is the $i$-th element of the standard basis of $\R^n$. If we define $\partial_i f(x):= \langle e_i, \partial f(x)\rangle$, we have that
\begin{equation}\label{eq:subd_compon_l1}
\partial_i f(x) =
\begin{cases}
\{ \partial_i g(x) + \gamma \nu_i\mid \nu_i=\sign(x_i) \} & x_i\neq 0,\\
\{ \partial_i g(x) + \gamma \nu_i\mid \nu_i\in [-1,1] \} & x_i=0,
\end{cases}
\end{equation}
for every $i=1,\ldots,n$, where $\partial_i g(x) := \frac{\partial}{\partial x_i}
g(x)$ denotes the usual partial derivative of the regular term $g:\R^n\to\R$ at the right-hand side of \eqref{eq:compos_l1}.
From \eqref{eq:subd_compon_l1} we read that the $i$-th component of $\partial f(x)$ is affected only by $\nu_i$. Therefore, in order to compute the operator $\partial^-f:\R^n\to\R^n$ introduced in Definition~\ref{def:min_norm_subd}, we can find \textit{separately} the element of minimal absolute value of $\partial_i f(x)$ for $i=1,\ldots, n$.
We use $\partial_i^-f(x)$ to access the $i$-th component of $\partial^-f(x)$. In particular, for every $x\in\R^n$ we have that
\begin{equation}\label{eq:expr_sub-_f}
\partial_i^- f(x) = 
\begin{cases}
\partial_i g(x) + \gamma \sign (x_i)
& x_i\neq 0,\\
\sign (\partial_i g(x))
\max\{ |\partial_i g(x)| -\gamma,0 \}
& x_i=0.
\end{cases}
\end{equation}

\begin{defn}\label{def:partition}
Given $x = (x_1,\ldots,x_n)\in \R^n$, we define the following partition of the components $\{ 1,\ldots,n \}$ induced by the point $x$: 
\begin{equation} \label{eq:def_alfa_beta}
\begin{split}
\alpha^+_x &:= \{ i\in\{1,\ldots,n\} \mid x_i>0 \},\\
\alpha^-_x &:= \{ i\in\{1,\ldots,n\} \mid x_i<0 \},\\
\beta_x &:= \{ i\in\{1,\ldots,n\} \mid x_i=0 \}.
\end{split}
\end{equation}
\end{defn}
From now on, when making use of a partition $\alpha^1,\ldots,\alpha^k$ of the indexes of the components $\{1,\ldots,n\}$, for every $z=(z_1,\ldots,z_n)\in \R^n$ we write $z=(z_{\alpha^1},\ldots,z_{\alpha^k})$, where $z_{\alpha^j}\in \R^{|\alpha^j|}$ is the vector obtained by extracting from $z$ the components that belong to $\alpha^j$, i.e., $z_{\alpha^j}=(z_i)_{i\in \alpha^j}$ for every $j=1,\ldots,k$.
The next technical result is the key-lemma of the convergence proof of Section~\ref{sec:subgr_meth}. 

\begin{lemma} \label{lem:direction_decrease}
Let $f:\R^n\to\R$ be a convex function of the 
form \eqref{eq:compos_l1}.
Given $x\in \R^n$, let $\alpha^+_x,\alpha^-_x,\beta_x$
be the partition of $\{ 1,\ldots,n\}$ 
corresponding to the point $x$ and
prescribed by \eqref{eq:def_alfa_beta}.
Let us consider a vector 
 $v=(v_1,\ldots,v_n)\in \R^n$
such that
\begin{equation} \label{eq:hp_v}
\begin{cases}
x_i + v_i \geq 0 &\forall i\in \alpha^+_x,\\
x_i + v_i \leq 0 &\forall i\in \alpha^-_x,\\
v_i = 0 & \forall i\in \beta_x \mbox{ s.t. }
\partial^-_if(x) =0,\\
v_i \geq 0 & \forall i\in \beta_x \mbox{ s.t. }
\partial^-_if(x) <0,\\
v_i \leq 0 & \forall i\in \beta_x \mbox{ s.t. }
\partial^-_i f(x) >0.
\end{cases}
\end{equation}
Then the following inequality holds: 
\begin{equation}\label{eq:dec_est}
f(x+v) \leq f(x) +\langle \partial^- f(x) , v \rangle
+ \frac12 L|v|_2^2,
\end{equation}
where $L>0$ is the Lipschitz constant of the regular term at the right-hand side of \eqref{eq:compos_l1}, and $\partial^-f(x)$ is defined as in Definition~\ref{def:min_norm_subd}.
\end{lemma}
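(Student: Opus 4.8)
The plan is to decompose the objective as $f = g + \gamma|\cdot|_1$ and to treat the two summands separately. For the smooth part I would invoke the standard descent lemma, i.e.\ the quadratic upper bound valid for any function with $L$-Lipschitz gradient, namely $g(x+v) \leq g(x) + \langle \nabla g(x), v\rangle + \frac12 L|v|_2^2$. This single inequality already accounts both for the quadratic remainder $\frac12 L|v|_2^2$ and for the $\nabla g(x)$ contribution hidden inside $\partial^- f(x)$. It then remains to handle the $\ell_1$ term, and here I expect an \emph{exact} first-order identity rather than a mere inequality.

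Writing $\gamma\nu^* := \partial^- f(x) - \nabla g(x)\in\R^n$, formula \eqref{eq:expr_sub-_f} gives the entries of $\nu^*$ explicitly: $\nu^*_i = \sign(x_i)$ when $x_i\neq0$, while for $x_i=0$ one has $\nu^*_i = -\sign(\partial_i g(x))$ if $|\partial_i g(x)|>\gamma$ and $\nu^*_i = -\partial_i g(x)/\gamma$ if $|\partial_i g(x)|\leq\gamma$; in every case $\nu^*_i\in[-1,1]$, so $\nabla g(x)+\gamma\nu^*$ is a genuine element of $\partial f(x)$. The heart of the argument is the componentwise claim $|x_i+v_i| = |x_i| + \nu^*_i v_i$ for each $i$, which, summed over $i$ and multiplied by $\gamma$, yields $\gamma|x+v|_1 = \gamma|x|_1 + \langle \gamma\nu^*, v\rangle$. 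Adding this identity to the descent inequality for $g$ and recombining $\nabla g(x)+\gamma\nu^* = \partial^- f(x)$ produces exactly \eqref{eq:dec_est}.

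The verification of this componentwise identity is where the hypotheses \eqref{eq:hp_v} are used, and I regard the zero components $i\in\beta_x$ as the main obstacle. For $i\in\alpha^+_x$ the constraint $x_i+v_i\geq0$ keeps both $x_i$ and $x_i+v_i$ on the positive branch of $|\cdot|$, so $|x_i+v_i|-|x_i| = v_i = \sign(x_i)v_i = \nu^*_i v_i$; the case $i\in\alpha^-_x$ is symmetric. For $i\in\beta_x$ I would split according to the sign of $\partial^-_i f(x)$, matching each line of \eqref{eq:hp_v} with the explicit value of $\nu^*_i$ above. If $\partial^-_i f(x)=0$ then $v_i=0$ and both sides vanish; if $\partial^-_i f(x)<0$ then necessarily $\partial_i g(x)<-\gamma$, hence $\nu^*_i=1$, and the hypothesis $v_i\geq0$ gives $|x_i+v_i|=|v_i|=v_i=\nu^*_i v_i$; if $\partial^-_i f(x)>0$ then $\partial_i g(x)>\gamma$, hence $\nu^*_i=-1$, and $v_i\leq0$ gives $|v_i|=-v_i=\nu^*_i v_i$. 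In each instance the sign condition imposed on $v_i$ is precisely what forces $x_i+v_i$ to remain on the branch of the absolute value whose slope equals $\nu^*_i$, turning the subgradient inequality for $|\cdot|$ into an equality. The only genuine inequality in the whole argument is thus the descent lemma applied to $g$, while the nonsmooth part contributes an exact linear expansion — which is exactly why the constraints \eqref{eq:hp_v} are designed the way they are.
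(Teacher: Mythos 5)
Your proof is correct, but it takes a route genuinely different from the paper's. The paper constructs a globally smooth auxiliary function $\faux(z) = g(z_{\zeta^+},z_{\zeta^-},0_{\zeta^0}) + \gamma\sum_{i\in\zeta^+}z_i - \gamma\sum_{i\in\zeta^-}z_i$ that coincides with $f$ on the closed cone $C_{\zeta^{\pm,0}}$ containing the segment from $x$ to $x+v$, verifies that $\nabla\faux$ is still $L$-Lipschitz (this requires composing $\nabla g$ with the $1$-Lipschitz projection onto $\{z_{\zeta^0}=0\}$), applies the smooth descent lemma to $\faux$ as a whole, and then checks componentwise that $\partial_i\faux(x)\,v_i = \partial_i^- f(x)\,v_i$. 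You instead never smooth the composite: you apply the descent lemma only to $g$ and prove the exact identity $|x_i+v_i| = |x_i| + \nu_i^* v_i$ for $\gamma\nu^* = \partial^- f(x) - \nabla g(x)$, so that the $\ell_1$ term contributes an exact linear expansion and the only inequality in the whole argument sits in $g$. Your case analysis is sound: when $x_i=0$ and $\partial_i^- f(x)<0$, formula \eqref{eq:expr_sub-_f} indeed forces $\partial_i g(x) < -\gamma$ strictly (equality $\partial_i g(x)=-\gamma$ would give $\partial_i^- f(x)=0$), hence $\nu_i^*=1$, matching the sign constraint $v_i\geq0$; the symmetric case and the $v_i=0$ case (where the value of $\nu_i^*\in[-1,1]$ is irrelevant) are handled correctly, and summing and recombining $\nabla g(x)+\gamma\nu^* = \partial^- f(x)$ gives \eqref{eq:dec_est}. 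Comparing the two: your argument is more economical — it dispenses with the auxiliary function, its gradient formula \eqref{eq:grad_faux}, and the Lipschitz verification — and it isolates where the hypotheses \eqref{eq:hp_v} act, namely in keeping $x_i+v_i$ on the branch of $|\cdot|$ whose slope is $\nu_i^*$; the paper's construction, in exchange, makes explicit the geometric picture that the segment lies in a region where the restriction of $f$ is smooth, a viewpoint that adapts more directly to other simple nonsmooth penalties whose restrictions to polyhedral pieces are smooth.
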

\begin{remark}
We recall that, in the case of a regular convex function $\phi:\R^n\to\R$ with $L$-Lipschitz continuous gradient, we have \begin{equation} \label{eq:dec_est_reg}
\phi(x+v) \leq \phi(x) +\langle\nabla \phi(x), v\rangle + \frac12 L|v|_2^2
\end{equation}
for every $x,v\in \R^n$ (see, e.g., \cite[Theorem~2.1.5]{N18}). The crucial fact for the proof of Lemma~\ref{lem:direction_decrease} is that, when $v$ satisfies the conditions \eqref{eq:hp_v}, the segment $\overrightarrow{xx'}$ lies in a region where the restriction of the objective $f$ is regular, where we set $x':=x+v$. 
Lemma~\ref{lem:direction_decrease} will be used to prove that, along proper directions, the objective function $f$ is decreasing.
\end{remark}

\begin{proof}
Before proceeding, we introduce another partition of the set of indexes $\beta_x$:
\begin{align*}
\beta^+_x &:=\{ i\in\beta_x\mid v_i>0 \},\\
\beta^-_x &:=\{ i\in\beta_x\mid v_i<0 \},\\
\beta^0_x &:=\{ i\in\beta_x\mid v_i=0 \},
\end{align*} 
and we define
\begin{equation*}
\zeta^+ := \alpha^+_x \cup \beta^+_x, \qquad
\zeta^- := \alpha^-_x \cup \beta^-_x, \qquad
\zeta^0 :=  \beta^0_x
\end{equation*}
where $\alpha^+_x, \alpha^-_x, \beta_x$ are set accordingly to \eqref{eq:def_alfa_beta}.
If we consider the segment $t\mapsto \eta(t) = x+tv$ for $t\in[0,1]$, it turns out that 
$\eta(t)\in C_{\zeta^{\pm,0}}$ for every $t\in[0,1]$,
where
\begin{equation*}
C_{\zeta^{\pm,0}} := \left\{ z\in \R^n\mid 
z_{\zeta^+}\geq 0, z_{\zeta^-}\leq 0, z_{\zeta^0}= 0 \right\}.
\end{equation*}
Let us define the auxiliary function $\faux:\R^{n}\to\R$ as 
\begin{equation} \label{eq:def_f_aux}
\faux: z=
(z_{\zeta^+}, z_{\zeta^-},z_{\zeta^0})
\mapsto
g(z_{\zeta^+},z_{\zeta^-},0_{\zeta^0})+
\gamma \sum_{i  \in \zeta^+}z_i
- \gamma \sum_{i \in \zeta^-}z_i,
\end{equation}
where $g:\R^n\to\R$ is the smooth term at the right-hand side of \eqref{eq:compos_l1}.
From the definition of $\faux$, it follows that
\begin{equation} \label{eq:grad_faux}
\nabla \faux: z=
(z_{\zeta^+}, z_{\zeta^-},z_{\zeta^0})
\mapsto
\nabla g(z_{\zeta^+},z_{\zeta^-},0_{\zeta^0})+
\gamma \sum_{i  \in \zeta^+}e_i
- \gamma \sum_{i \in \zeta^-}e_i.
\end{equation}
We observe that the function $\faux:\R^n\to\R$ is as regular as $g$, i.e., it is of class $C^1$ with $L$-Lipschitz continuous gradient. Indeed, the first term at the right hand-side of \eqref{eq:grad_faux} is obtained as the composition $\nabla g \circ \Pi_{\zeta^0}$, where $\Pi_{\zeta^0}:\R^n\to\R^n$ is the linear ($1$-Lipschitz) orthogonal projection onto the subspace $\{ z\in \R^n\mid z_{\zeta^0}=0 \}\subset\R^n$. Moreover, the last terms at the right hand-side of \eqref{eq:grad_faux} are constant.
Therefore, using the identity
\[
f|_{C_{\zeta^{\pm,0}}} \equiv \faux
|_{C_{\zeta^{\pm,0}}},
\]
if we apply the estimate \eqref{eq:dec_est_reg} to $\faux$, we deduce that
\begin{equation*}
f(x+v) \leq f(x) + \langle 
\nabla \faux (x), v \rangle + \frac12L|v|_2^2.
\end{equation*}
Therefore, the thesis follows if we show that the following equalities hold:
\begin{equation} \label{eq:last_step_lemma}
 \frac{\partial}{\partial x_i} \faux (x) v_i 
=
 \partial_i^- f (x) v_i \qquad i=1,\ldots,n.
\end{equation}
Using the partition of the components
$\{1,\ldots,n\}$ provided by 
the families of indexes $\alpha^+_x$, $\alpha^-_x$, $\beta^+_x$, $\beta^-_x$, and $\beta^0_x$, we have the following possibilities:
\begin{itemize}
\item If $i \in \alpha^+_x$, in virtue of
\eqref{eq:expr_sub-_f} and \eqref{eq:def_f_aux}, we obtain
$\partial_i^- f(x) = \frac{\partial}{\partial x_i} g(x) + \gamma=\frac{\partial}{\partial x_i}\faux(x)$.
\item The case $i\in \alpha^-_x$ is analogous to
$i\in\alpha^+_x$.
\item If $i \in \beta^+_x$, then $x_i=0$ and $v_i>0$, and, in virtue
of \eqref{eq:hp_v}, we deduce that 
$\partial_i^-f(x) <0$. In particular, using again 
\eqref{eq:expr_sub-_f}, this implies that 
$\partial_i^-f(x) = \frac{\partial}{\partial x_i}g(x)+\gamma$.
 On the other
hand, recalling the expression of $\faux$ in \eqref{eq:def_f_aux} and the inclusion $\beta^+_x\subset \zeta^+$,
we finally deduce $\frac{\partial}{\partial x_i} \faux (x) = \partial_ig(x) +\gamma$.
\item The case $i\in \beta^-_x$ is analogous to
$i\in\beta^+_x$.
\item If $i\in \beta^0_x$, then $v_i=0$, and we immediately obtain $\partial_i^-f(x)v_i = 0 =\frac{\partial}{\partial x_i}\faux(x)v_i$.
\end{itemize}
This argument shows that \eqref{eq:last_step_lemma}
is true, and it concludes the proof. 
\end{proof}

\section{Subgradient method and convergence analysis} \label{sec:subgr_meth}
In this section we propose a subgradient method with constant step-size for the numerical minimization of a convex function $f:\R^n\to\R$ with the \textit{composite} structure reported in \eqref{eq:compos_l1}. We insist on the fact that the analysis presented here holds only when the non-smooth term at the right-hand side of \eqref{eq:compos_l1} is a $\ell_1$-penalization.\\
Before introducing formally the algorithm, we provide some insights that have guided us towards its construction.
Let $\bar x\in \R^n$ be the current guess for the minimizer of $f$. We want to find a suitable direction in the subdifferential $\subel\in \partial f(\bar x)$ such that $f(\bar x-h\subel)\leq f(\bar x)$, where $h>0$ represents a \textit{constant} step-size. In order to accomplish this, a natural choice consists in setting $\subel = \partial^-f(\bar x)$, where $\partial^-f(\bar x)$ is defined as in \eqref{eq:def_min_norm_subd}.
To see this, we first observe that, in virtue of the particular structure of $\partial f$ reported in \eqref{eq:subd_compon_l1}, we can choose \textit{separately} the components $\subel_1,\ldots, \subel_n$ of the direction of the movement. If $\bar x_i\neq 0$, then $\partial_i f(\bar x)$ consists of a single element, hence the only possible choice is $\subel_i = \partial^-_i f(\bar x)$. 
If $\bar x_i = 0$ and $\partial^-_i f(\bar x) =0$, then the convex application $t\mapsto f(\bar x + t e_i)$ attains the minimum at $t=0$. Hence, any choice $\subel_i\in \partial_i f(\bar x)$ with $\subel_i\neq 0$ would give $f(\bar x - h\subel_i e_i)\geq f(\bar x)$, resulting in an increase of the objective function. 
For this reason, it is convenient to set $\subel_i=\partial^-_i f(\bar x)=0$, and to move \textit{tangentially} to the non-differentiability region $\{ x\in \R^n\mid x_i=0 \}$.
On the other hand, if $\bar x_i = 0$ and, e.g., $\partial^-_i f(\bar x) >0$, then $\partial_i f(\bar x)\subset (0,+\infty)$, and for every choice of $\subel_i\in \partial_i f(\bar x)$, we have that $\bar x_i - h\subel_i = - h\subel_i <0$. However, observing that $\lim_{h\to 0^+} (f(\bar x+he_i)-f(\bar x))/h=\partial^-_i f(\bar x)$, it looks natural to set once again $\subel_i = \partial^-_i f(\bar x)$.\\
Besides the selection of the direction $\subel =\partial^-f(\bar x)$, the second crucial aspect is whether some sign changes occur in the coordinates when moving from $\bar x$ to $\bar x - h\partial^- f(\bar x)$. 
If not, the situation is pretty analogous to a step of the classical gradient descent in the smooth framework. On the other hand, if there is, e.g., a positive component $\bar x_i$ that becomes negative, then we should carefully decide if the \textit{barrier} $\{x\in \R^n\mid x_i=0  \}$ should be crossed, or not. 
This is a key-point, in order to avoid the oscillations that characterized the simple example in the Introduction. In this case, we first set to $0$ the components involved in a sign change, and for these components we re-evaluate $\partial^- f$. 
Finally, using this additional information, we complete the step, as depicted in Figure~\ref{fig:cross_l1}. The implementation of the method is described in Algorithm~\ref{Alg_subgr_l1}.

\begin{figure}
\centering
\includegraphics[scale=0.2]{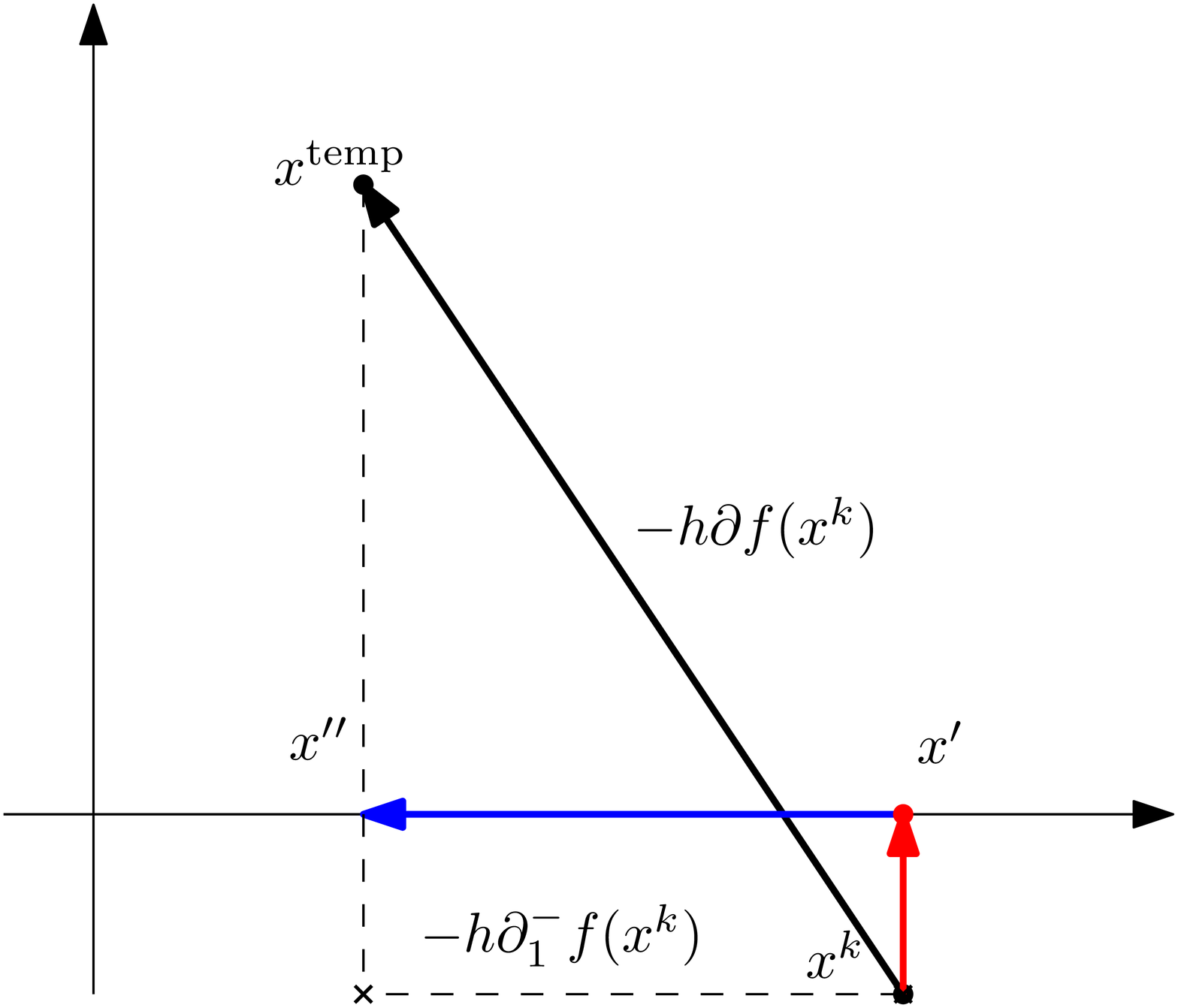}
\includegraphics[scale=0.2]{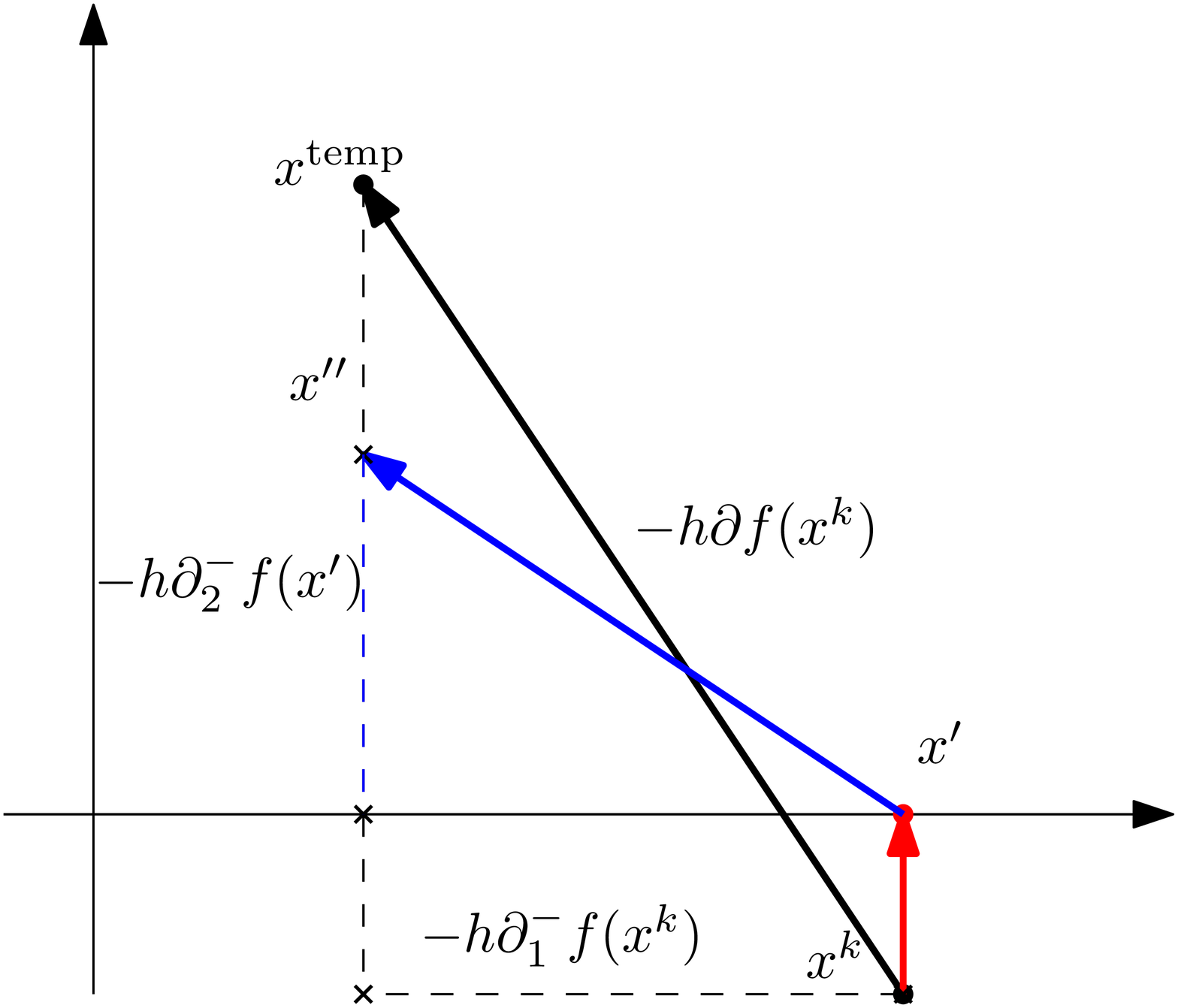}
\caption{In this $2D$-example, we observe that the second component $x_2$ changes sign when moving from $x^k$ to $\xtemp = x^k-h\partial^-f(x^k)$. Hence, we first consider $x'=(x^k_1,0)$ and we evaluate $\partial^-_2 f(x')$. If $\partial^-_2 f(x')=0$ (see the picture on the left), then we complete the step by moving tangentially to the axis $\{ x_2=0 \}$, in the direction $-(\partial^-_1 f(x^k),0)$. If $\partial^-_2 f(x')\neq 0$ (right), then we complete the step using the direction $-(\partial^-_1 f(x^k),\partial^-_2 f(x'))$.}
\label{fig:cross_l1}
\end{figure}

\begin{algorithm}
\caption{Subgradient method for $\ell_1$-composite optimization}
\begin{algorithmic}[1]
  \scriptsize
  \STATE $x^0$ initial guess, $h>0$ constant step-size
  \STATE $k \gets 0$
  \WHILE{$k \leq \max_{\mathrm{iter}}$}
  \STATE $x^{\mathrm{temp}} \gets x^k - h\partial^- f(x^k)$
  \IF {$\sign(x_i^{\mathrm{temp}}\cdot x_i^k )\geq 0, \forall i =1,\ldots,n$}
  \STATE $x^{k+1}\gets x^{\mathrm{temp}}$
  \ELSE
  \STATE $I\gets \{ i \mid \sign(x_i^{\mathrm{temp}}\cdot x_i^k )\leq 0 \}$
  \STATE $x'_i\gets x_i^k, \quad \forall i\not\in I$
  \STATE $x'_i\gets 0, \quad \forall i\in I$
  \STATE $v_i'' \gets -h\partial_i^-f(x^k), \quad \forall i\not\in I$
  \STATE $v_i'' \gets -h\partial_i^-f(x'), \quad \forall i\in I$
  \STATE $x''\gets x' + v''$
  \IF{$f(x')<f(x'')$}
  \STATE $x^{k+1}\gets x'$
  \ELSE
  \STATE $x^{k+1}\gets x''$
  \ENDIF
  \ENDIF
  \STATE $k \gets k+1$
  \ENDWHILE
\end{algorithmic}
\label{Alg_subgr_l1}
\end{algorithm}

We now establish the linear convergence result for Algorithm~\ref{Alg_subgr_l1} in the case of strongly convex objective.

\begin{theorem}\label{thm:lin_conv_subg}
Let $f:\R^n\to\R$ be a function such that $f(x) =g(x) +\gamma |x|_1$ for every $x\in \R^n$, where $\gamma>0$ and $g:\R^n\to\R$ is  $C^1$-regular. We further assume that there exist constants $L>\mu>0$ such that $g$ is $\mu$-strongly convex and $\nabla g$ is $L$-Lipschitz continuous. Let $(x^k)_{k\geq 0}$ be the sequence generated by Algorithm~\ref{Alg_subgr_l1}. Then, there exists $\kappa = \kappa(L,\mu)\in (0,1)$ such that
\begin{equation}\label{eq:lin_conv_subg}
f(x^k)-f(x^*) \leq \kappa^k (f(x^0)-f(x^*)),
\end{equation}
where $x^*\in\R^n$ denotes the unique minimizer of $f$, and where we set the step-size $h=\frac1L$.
\end{theorem}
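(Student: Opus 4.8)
The plan is to extract from each iteration a \emph{sufficient-decrease} inequality and to close the argument with the non-smooth Polyak--{\L}ojasiewicz inequality of Lemma~\ref{lem:non_smooth_PL}. Since $g$ is $\mu$-strongly convex and $\gamma|\cdot|_1$ is convex, $f$ is $\mu$-strongly convex, so Lemma~\ref{lem:non_smooth_PL} gives $|\partial^- f(x)|_2^2\geq 2\mu\,(f(x)-f(x^*))$ at every $x$. The target is therefore a per-step contraction $f(x^{k+1})-f(x^*)\leq \kappa\,(f(x^k)-f(x^*))$ with $\kappa=\kappa(L,\mu)\in(0,1)$; iterating it yields \eqref{eq:lin_conv_subg}. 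The workhorse is Lemma~\ref{lem:direction_decrease}: whenever a displacement $v$ satisfies \eqref{eq:hp_v}, the objective admits the quadratic upper model $f(x+v)\leq f(x)+\langle\partial^- f(x),v\rangle+\tfrac{L}{2}|v|_2^2$, and the choice $h=\tfrac1L$ is exactly the one minimizing this model along $v=-h\,\partial^- f(x)$.

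\emph{Case 1 (no sign change).} If $\sign(\xtemp_i\, x_i^k)\geq 0$ for all $i$, the update is $x^{k+1}=x^k-h\,\partial^- f(x^k)$, and I would first verify that $v=-h\,\partial^- f(x^k)$ satisfies every bullet of \eqref{eq:hp_v}: the absence of sign changes gives the two $\alpha^{\pm}_{x^k}$ conditions, while \eqref{eq:expr_sub-_f} gives the three $\beta_{x^k}$ conditions directly from the sign of $\partial^-_i f(x^k)$. Feeding $v$ into Lemma~\ref{lem:direction_decrease} with $h=\tfrac1L$ produces $f(x^k)-f(x^{k+1})\geq \tfrac{1}{2L}|\partial^- f(x^k)|_2^2$, and combining this with the Polyak--{\L}ojasiewicz bound at $x^k$ gives the contraction with factor $\kappa=1-\tfrac{\mu}{L}\in(0,1)$.

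\emph{Case 2 (a sign change occurs).} Here I would split the step $x^k\to x'\to x''$ of Algorithm~\ref{Alg_subgr_l1} and apply Lemma~\ref{lem:direction_decrease} twice. One checks that $w:=x'-x^k$, which zeroes the indices of $I$ and fixes the others, is admissible at $x^k$, and that $v''$ is admissible at $x'$ (the three $\beta_{x'}$ conditions again following from \eqref{eq:expr_sub-_f}). I would also record that every $i\in I$ with $x_i^k\neq 0$ is a genuine sign change, so $|x_i^k|\leq h\,|\partial^-_i f(x^k)|=\tfrac1L|\partial^-_i f(x^k)|$ and $\partial^-_i f(x^k)$ shares the sign of $x_i^k$; this controls the quadratic remainder on those coordinates, since it forces $|w|_2\leq\tfrac1L\big(\sum_{i\in I\setminus\beta_{x^k}}|\partial^-_i f(x^k)|^2\big)^{1/2}$. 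Summing the two quadratic models and using that the algorithm retains $x^{k+1}$ with $f(x^{k+1})=\min\{f(x'),f(x'')\}$ is meant to deliver the descent estimate for this branch.

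\textbf{Main obstacle.} The difficulty lies entirely in Case~2, and it is twofold. First, the naive target $f(x^k)-f(x^{k+1})\geq c\,|\partial^- f(x^k)|_2^2$ is \emph{false} once a sign change happens: already for $n=1$, with $f(x)=g(x)+\gamma|x|$, $g'(0)=0$, and $x^k=\e>0$ small, one has $\partial^- f(x^k)\approx\gamma$ while $f(x^k)-f(x^*)\approx\gamma\e\to 0$, so no fixed multiple of $|\partial^- f(x^k)|_2^2$ can bound the decrease. The cure is to measure progress through the \emph{updated} min-norm subgradient: once the offending coordinates are zeroed, \eqref{eq:expr_sub-_f} makes $\partial^- f$ small there, so I would instead establish the descent in the form $f(x^k)-f(x^{k+1})\geq c\,|\partial^- f(x^{k+1})|_2^2$ and apply Lemma~\ref{lem:non_smooth_PL} at $x^{k+1}$, which yields a contraction with $\kappa=\tfrac{1}{1+2c\mu}\in(0,1)$; the global rate is then $\max$ of the two case factors. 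Second, because $v''$ reuses $\partial^- f(x^k)$ on the indices outside $I$ but re-evaluates $\partial^- f(x')$ on $I$, merging the two quadratic models leaves cross terms $\partial_i g(x')-\partial_i g(x^k)$, which I would bound by $L|x'-x^k|_2=L|w|_2$ using the $L$-Lipschitz continuity of $\nabla g$, and then absorb via the sign-change bound on $|w|_2$ above. Tracking all constants so that the resulting $\kappa$ depends only on $L$ and $\mu$ and stays in $(0,1)$, which is precisely where the hypothesis $L>\mu$ together with $h=\tfrac1L$ is spent, is the technical heart of the argument.
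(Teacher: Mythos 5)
Your sketch follows essentially the same route as the paper's proof: Case~1 is the paper's argument verbatim; in Case~2 you perform the same two-phase decomposition $x^k \to x' \to x''$, invoke Lemma~\ref{lem:direction_decrease} at both points, bound the stale-gradient mismatch $\partial_i g(x') - \partial_i g(x^k)$ via the $L$-Lipschitz continuity of $\nabla g$ exactly as in \eqref{eq:inex_term_2}, and absorb it through the sign-change bound $|x_i^k| \leq h\,|\partial_i^- f(x^k)|$, which is precisely the paper's statement $\eta_i \in [0,1]$ in \eqref{eq:bound_eta_i}. Your one-dimensional counterexample showing that a decrease proportional to $|\partial^- f(x^k)|_2^2$ is unattainable after a crossing is correct, and it is exactly the reason the paper relocates the Polyak--{\L}ojasiewicz anchor away from $x^k$. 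One small simplification you miss: with $h=\tfrac1L$ the linear cross term in \eqref{eq:dec_case_2_2} carries the prefactor $(1-Lh)=0$, so no first-order cross term needs to be bounded at all; only the quadratic error $\tfrac{L}{2}|v''+h\partial^- f(x')|_2^2$ survives.

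The one genuine soft spot is where you place the anchor. You propose to establish $f(x^k)-f(x^{k+1}) \geq c\,|\partial^- f(x^{k+1})|_2^2$ and apply Lemma~\ref{lem:non_smooth_PL} at $x^{k+1}$. In the branch $x^{k+1}=x'$ this is the paper's \eqref{eq:dec_est_2_final}; but in the branch $x^{k+1}=x''$ your inequality is not delivered by the estimates you assemble: the two-phase analysis controls $|\partial^- f(x')|_2$, not $|\partial^- f(x'')|_2$, and $\partial^- f$ is discontinuous across the crossed hyperplanes. Indeed, on indices $i \in \xi^0_{x^k}$ where the soft-thresholding in \eqref{eq:expr_sub-_f} gives $\partial_i^- f(x')=0$, the partial derivative $\partial_i g$ can still be of size $\gamma$ and the thresholding can reactivate at $x''$; moreover the stale-gradient error in $v''$ scales with $|v'|_2$, which is \emph{not} absorbed by the Phase-1 decrease when some $\eta_i$ is close to $1$ (the coefficient $\eta_i-\eta_i^2$ in \eqref{eq:dec_case_2_1} vanishes there). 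So bounding $|\partial^- f(x'')|_2^2$ by the achieved decrease with a constant depending only on $L,\mu$ would require a separate, nontrivial argument. The paper sidesteps this entirely: it applies Lemma~\ref{lem:non_smooth_PL} at the \emph{intermediate} point $x'$, obtaining \eqref{eq:dec_PL_case_2}, and then exploits the algorithm's selection $f(x^{k+1}) \leq \min\{f(x'),f(x'')\} \leq f(x')$ to conclude $(1+\mu/L)\,(f(x^{k+1})-f(x^*)) \leq f(x^k)-f(x^*)$, i.e.\ \eqref{eq:final_dec_case_2}. With that one-line relocation of the anchor from $x^{k+1}$ to $x'$, your argument closes and reproduces the paper's proof; your rate $1/(1+2c\mu)$ with $c=\tfrac{1}{2L}$ is exactly the paper's $(1+\mu/L)^{-1}$, giving $\kappa = \max\left(1-\tfrac{\mu}{L},\,\left(1+\tfrac{\mu}{L}\right)^{-1}\right)$.
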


\begin{proof}
We follow the procedure described in Algorithm~\ref{Alg_subgr_l1}. We prove that each iteration leads to a linear decrease of the value of the objective function.
The first stage of each step is based on the following update:
\begin{equation} \label{eq:update_provv}
\xtemp := x^k - h\partial^-f(x^k),
\end{equation}
where $h>0$ represents the step-size of the
sub-gradient method.
We distinguish two possible scenarios, corresponding to the \texttt{if-else} statement at the lines~5 and~7 of Algorithm~\ref{Alg_subgr_l1}.\\
\textbf{Case 1.} We have that 
\begin{equation} \label{eq:subgr_case_1}
\sign(\xtemp_i \cdot x^k_i)\geq 0, \quad 
\forall i=1,\ldots,n, 
\end{equation}
i.e., none of the components of $ x^k $ and of $\xtemp$ changes sign, in the sense that from strictly positive it becomes strictly negative, or vice-versa.
If we set $v:=-h\partial^-f(x^k)$, we observe that the hypotheses of Lemma~\ref{lem:direction_decrease} are met for the point $x^k$ and the vector $v$. Indeed, using the partition introduced in \eqref{eq:def_alfa_beta} and induced by the point $x^k$, from \eqref{eq:subgr_case_1} it follows that $i\in \alpha^+_{x^k}$ implies $x^k_i+v_i\geq 0$. A similar argument holds for $i\in \alpha^-_{x^k}$. Finally, if $i\in \beta_{x^k}$, then $v_i$ satisfies \eqref{eq:hp_v} by construction.
Therefore, from \eqref{eq:dec_est} we deduce that 
\begin{equation*} 
\begin{split}
f(\xtemp) &\leq f(x^k) + \langle \partial^-f(x^k)
 , v \rangle + \frac12 L |v|_2^2 \\
& \quad = f(x^k) - \left( h - \frac{h^2}{2}L \right)
\left| \partial^- f(x^k) \right|_2^2.
\end{split} 
\end{equation*}
Moreover, if $h\leq \frac2L$, in virtue of Lemma~\ref{lem:non_smooth_PL},
we obtain that
\begin{equation*} 
f(\xtemp) -f(x^*) \leq \left( f(x^k) - f(x^*) \right)
\left(1 -  2\mu\left( h - \frac{h^2}{2}L \right) 
\right).
\end{equation*}
In this case, we assign $x^{k+1}:=\xtemp$ and, choosing $h=\frac1L$ in order to minimize the right-hand side of the previous inequality, we get
\begin{equation}\label{eq:dec_case_1_fin}
f(x^{k+1}) - f(x^*) \leq \left( 1- \frac{\mu}L \right)
\left( f(x^k) - f(x^*) \right).
\end{equation}
\textbf{Case 2.} Recalling the definition of $\xtemp$ in \eqref{eq:update_provv}, we are in the second scenario when
\begin{equation}\label{eq:subgr_case_2}
\exists i\in \{ 1,\ldots,n \}:\, 
(x_i^k>0 \mbox{ and } \xtemp_i< 0)
\mbox{ or } (x_i^k< 0 \mbox{ and } \xtemp_i> 0),
\end{equation}
i.e., there is at least one component that \textit{strictly} changes sign.
Before proceeding, we introduce 
the following partition of the components:
\begin{equation} \label{eq:divis_comp_2}
\begin{cases}
i \in \xi^+_{x^k} & 
\mbox{if } (x^k_i>0 \mbox{ and } 
\xtemp_i> 0), \\
i \in \xi^-_{x^k} & 
\mbox{if } (x^k_i<0 \mbox{ and } \xtemp_i< 0),\\
i \in \xi^0_{x^k}  & 
\mbox{if }\,
\sign(\xtemp_i \cdot x_i^k)\leq 0,
\end{cases}
\end{equation}
and we define the following intermediate
points:
 \begin{align}\label{eq:def_x'}
{\mbox{\bf \textit{Phase (1)} }} \qquad  x' := 
(x^k_{\xi^+_{x^k}},x^k_{\xi^-_{x^k}},0_{\xi^0_{x^k}} ),
\end{align}
and
\begin{align} \label{eq:def_x''}
{\mbox {\bf \textit{Phase (2)} }} \qquad   x'' := x' +  v'',
\end{align}
where
\begin{equation} \label{eq:def_v''_inex_subgr}
v'' := -h \left(\partial^-_{\xi^+_{x^k}}f(x^k),
\partial^-_{\xi^-_{x^k}}f(x^k), 
\partial^-_{\xi^0_{x^k}}f(x')\right).
\end{equation}
We observe that \eqref{eq:def_x'} corresponds to the assignments of lines~9-10 in Algorithm~\ref{Alg_subgr_l1}, while \eqref{eq:def_v''_inex_subgr} incorporates lines~11-12. Finally, $x''$ is defined in \eqref{eq:def_x''} accordingly to line~13.
We insist on the fact that in the update 
\eqref{eq:def_x''} the vector $v''$ is computed 
by re-evaluating $\partial^-_{\xi^0_{x^k}}f$  
at the point $x'$. This is because 
$\partial^-_{\xi^0_{x^k}}f$ may exhibit sudden changes
when considering the points $x^k$ and $x'$. 
In this regard, our
construction guarantees that we employ the
most trustworthy values for the choice of the
decrease direction $v''$. We point out that, if $x^k_i=0$, then $i\in \xi^0_{x^k}$. Moreover, we remark that if $i\in \xi^0_{x^k}$ and $\partial_i f(x^k)=0$, then we have necessarily that $x^k_i=0$. Indeed, in this case, from \eqref{eq:update_provv} and $\partial_i f(x^k)=0$ it follows that $\xtemp_i = x_i^k$, while $i\in \xi^0_{x^k}$ gives $\sign( x_i^k \cdot  x_i^k)\leq 0$, resulting in $x_i^k=0$.\\
{\bf \textit{Phase (1).}} From \eqref{eq:def_x'}, we immediately observe that 
\begin{equation*}
x' = x^k + v',
\end{equation*}
with 
\begin{equation} \label{eq:def_v'_step_1}
v_i' :=
\begin{cases}
0 & \mbox{if } i \in \xi^+_{x^k} \cup \xi^-_{x^k},\\
 -   \eta_i  h\partial^-_i f(x^k)
& \mbox{if } i\in \xi^0_{x^k},
\end{cases}
\end{equation}
and where, for every $i\in\xi^0_{x^k}$, we set
\begin{equation*}
\eta_i := 
\begin{cases}
\frac{x_i^k}{h\partial_i^- f(x^k)} & \mbox{if } \partial_i^- f(x^k) \neq 0,\\
0 & \mbox{if } \partial_i^- f(x^k) = 0.
\end{cases}
\end{equation*}
We first notice that $\eta_i \in [0,1]$. Indeed, 
assuming that $\partial_i^- f(x^k) \neq 0$ (otherwise
there is nothing to prove), since
$i\in\xi^0_{x^k}$, recalling \eqref{eq:divis_comp_2} and \eqref{eq:update_provv}, we have
\begin{equation} \label{eq:comp_sign_eta}
x_i^k\left( x_i^k -
h\partial^-_if(x^k) \right)\leq 0,
\end{equation}
 which in turn
gives $x_i^k \partial^-_if(x^k)\geq 0$ and,
as a matter of fact, $\eta_i\geq 0$.
On the other hand, in order to show that $\eta_i\leq1$,
we assume without loss of generality 
that $x^k_i\neq 0$. Then, 
using again \eqref{eq:comp_sign_eta}, 
it follows that 
\begin{equation*}
0\geq 
\left( 1 - \frac{h\partial^-_if(x^k)}{x_i^k} \right)
= \left( 1- \frac1\eta_i \right),
\end{equation*}
that yields $\eta_i\leq1$.
Therefore, we conclude that
\begin{equation}\label{eq:bound_eta_i}
0\leq \eta_i\leq 1 \qquad \forall i \in \xi^0_{x^k}.
\end{equation}
Finally, from \eqref{eq:subgr_case_2}
we deduce that there exists at least one 
index $\hat i \in \xi^0_{x^k}$ such that
$\eta_{\hat i}>0$.\\
Using the partition $\alpha^+_{x^k},\alpha^-_{x^k},\beta_{x^k}$
of $\{ 1, \ldots, n\}$ induced by the point $x^k$
and prescribed by \eqref{eq:def_alfa_beta},
 we obtain that
the following conditions are satisfied:
\begin{itemize}
\item If $i\in \alpha^+_{x^k}$, then 
either $i  \in \xi^+_{x^k}$ or 
$i  \in \xi^0_{x^k}$. In the first case, $x'_i=x^k_i>0$, then $v'_i=0$. In the second, $x'_i = 0 = x^k_i + v'_i$. Hence, in any case, $x^k_i + v'_i\geq 0$. 
\item If $i\in \alpha^-_{x^k}$, then an analogous reasoning as before yields $x^k_i + v'_i\leq 0$. 
\item If $i\in \beta_{x^k}$, then $x^k_i=0$. Hence, $i\in \xi^0_{x^k}$, and $x'_i =0$. Therefore, $v'_i=0$.
\end{itemize}
The previous argument proves that the vector $v'$ introduced in \eqref{eq:def_v'_step_1} satisfies the assumptions of Lemma~\ref{lem:direction_decrease} at the point $x^k$.
Thus, we deduce that
\begin{equation} \label{eq:dec_case_2_1}
\begin{split}
f(x') &\leq  
f(x^k) + \langle \partial^-f(x^k), v' \rangle
+ \frac{L}2|v'|^2_2 \\
& \quad =  f(x^k) - \sum_{i\in \xi^0_{x^k}} 
\left(
h\eta_i - h^2\eta_i^2\frac{L}{2}
\right)|\partial_i^-f(x^k)|^2.
\end{split}
\end{equation}
If we set $\bar \eta := \max\{\eta_i\mid i\in \xi^0_{x^k}\}$,
we observe that \eqref{eq:dec_case_2_1} 
implies that $f(x')\leq f(x^k)$ whenever 
$h\in \left[0, \frac{2}{L\bar\eta} \right]$. 
We stress the fact that the condition
 \eqref{eq:subgr_case_2}
that characterizes the present scenario
guarantees that $\bar \eta>0$.\\
{\bf \textit{Phase (2).}}
We now investigate the update described in
\eqref{eq:def_x''}-\eqref{eq:def_v''_inex_subgr}.
Let $\alpha^+_{x'},\alpha^-_{x'}$ and $\beta_{x'}$ be the partition of the components $\{ 1,\ldots,n \}$ induced by the point $x'$ and prescribed by \eqref{eq:def_alfa_beta}.
Recalling \eqref{eq:divis_comp_2} and the 
definition of $x'$ in \eqref{eq:def_x'}, we 
observe that $\alpha^+_{x'} = \xi^+_{x^k}$, $\alpha^-_{x'} = \xi^-_{x^k}$
and $\beta_{x'} = \xi^0_{x^k}$.
Hence, since $x'_i=x^k_i\neq0$ for every $i\in \alpha^+_{x'}\cup \alpha^-_{x'}$, from \eqref{eq:expr_sub-_f} it descends that
that
\begin{equation*} 
\partial^-_if(x')-\partial^-_if(x^k)
=  \partial_i g(x')-\partial_i g(x^k) \qquad
\forall i\in\alpha^+_{x'}\cup\alpha^-_{x'},
\end{equation*}
which, in virtue of \eqref{eq:def_v''_inex_subgr}, yields 
\begin{equation} \label{eq:sum_v''_subgrad_x'}
v''_i + h \partial^-_if(x')
=  h\left(
\partial_i g(x') - \partial_i g(x^k)
\right) \qquad
\forall i\in\alpha^+_{x'}\cup\alpha^-_{x'}.
\end{equation}
Moreover, using \eqref{eq:def_v''_inex_subgr}, \eqref{eq:update_provv} and \eqref{eq:divis_comp_2}, we deduce
that
\begin{equation}\label{eq:hyp_lem_v''_alpha}
\begin{cases}
i\in \alpha^+_{x'}  \implies
x''_i = x^k_i - h\partial^-_if(x^k)>0 \implies x''_i = x'_i + v''_i >0,\\
i\in \alpha^-_{x'}  \implies x''_i = x^k_i - h\partial^-_if(x^k)<0 \implies x''_i = x'_i + v''_i <0.
\end{cases}
\end{equation}
On the other hand, from \eqref{eq:def_v''_inex_subgr}
and recalling that $\beta_{x'} = \xi^0_{x^k}$,
we have that
\begin{equation}\label{eq:hyp_lem_v''_beta}
i\in \beta_{x'} \implies v''_i = -h \partial_i^- f(x').
\end{equation}
By combining \eqref{eq:hyp_lem_v''_alpha} and
\eqref{eq:hyp_lem_v''_beta}, we obtain that the
hypotheses of Lemma~\ref{lem:direction_decrease}
are met when considering the point $x'$ and the
direction $v''$. Hence, it follows that
\begin{equation} \label{eq:dec_case_2_2}
\begin{split} 
f(x'')  &\leq f(x') 
+ \langle \partial^- f(x'),
v''\rangle + \frac{L}{2}|v''|_2^2\\
&\quad = f(x') - 
\left( h-\frac{L}2h^2 \right)|\partial^- f(x')|_2^2\\
& \qquad
+ (1-Lh) \langle \partial^- f(x'), v'' + h\partial^- f(x') \rangle
+\frac{L}2 |v''+h\partial^-f(x')|_2^2.
\end{split}
\end{equation}
On the other hand, recalling \eqref{eq:def_v''_inex_subgr} and \eqref{eq:sum_v''_subgrad_x'}, we have that
\begin{equation} \label{eq:inex_term_2}
\begin{split}
|v''+h\partial^-f(x')|_2^2 &= h^2\sum_{i\in\alpha^+_{x'}\cup\alpha^-_{x'}} |\partial_ig(x^k)-\partial_ig(x')|^2 \leq h^2 |\nabla g(x^k) - \nabla g(x')|_2^2\\
& \leq h^4 L^2 \sum_{i\in \xi^0_{x^k}}
 \eta_i^2 |\partial^-_if(x^k)|^2,
\end{split}
\end{equation}
where we used the Lipschitz-continuity of $\nabla g$, \eqref{eq:def_v'_step_1} and the fact that $x'-x^k = v'$.
If we set $h=\frac1L$ in \eqref{eq:dec_case_2_2}, owing to \eqref{eq:inex_term_2} we deduce that
\begin{equation*}
f(x'') \leq f(x') - \frac{1}{2L}|\partial^- f(x')|_2^2
+ \frac1{2L} \sum_{i\in \xi^0_{x^k}}
 \eta_i^2 |\partial^-_if(x^k)|^2.
\end{equation*}
Moreover, by combining the last inequality with \eqref{eq:dec_case_2_1} (using again $h=\frac1L$), we obtain that
\begin{equation} \label{eq:dec_est_2_final}
\begin{split}
f(x'') &\leq f(x^k) - \frac{1}{2L}|\partial^- f(x')|_2^2
+ \frac1{L} \sum_{i\in \xi^0_{x^k}}
 (\eta_i^2-\eta_i) |\partial^-_if(x^k)|^2\\
&\leq  f(x^k) - \frac{1}{2L}|\partial^- f(x')|_2^2,
\end{split}
\end{equation}
where we used \eqref{eq:bound_eta_i} in the last passage. In virtue of Lemma~\ref{lem:non_smooth_PL}, 
from \eqref{eq:dec_est_2_final} we
deduce that
\begin{equation} \label{eq:dec_PL_case_2}
f(x'')-f(x^*) \leq f(x^k)- f(x^*)
- \frac{\mu}{L} \left( f(x') - f(x^*) \right).
\end{equation}
We now distinguish two possibilities, corresponding to the \texttt{if-else} statement at lines~14 and~16 of Algorithm~\ref{Alg_subgr_l1}.
\begin{itemize}
\item If $f(x')<f(x'')$, then we set 
$x^{k+1}:=x'$.
\item If $f(x'')\leq f(x')$, then we set 
$x^{k+1}:=x''$.
\end{itemize}
In any case, from \eqref{eq:dec_PL_case_2}
we obtain
\begin{equation} \label{eq:final_dec_case_2}
f(x^{k+1}) - f(x^*) \leq 
\left(
1 +\frac{\mu}{L}
\right)^{-1} (f(x^k)-f(x^*)).
\end{equation}
Finally, in virtue of \eqref{eq:dec_case_1_fin} and \eqref{eq:final_dec_case_2}, if we set 
\begin{equation*}
\kappa = \max\left(  
\left(
1 -\frac{\mu}{L}
\right),
\left(
1 +\frac{\mu}{L}
\right)^{-1}
\right),
\end{equation*}
we deduce the thesis.
\end{proof}

\begin{remark}
The hypothesis of the strong convexity of the smooth function $g:\R^n\to \R$ in Theorem~\ref{thm:lin_conv_subg} can be slightly relaxed by requiring that $g:\R^n\to \R$ is convex, that the objective $f:\R^n\to \R$ adimits a minimizer $x^*$ and that there exists a constant $\mu>0$ such that $f$ satisfies the inequality \eqref{eq:Pol_subd_min} for every $x\in \R^n$. 
Indeed, in the proof of Theorem~\ref{thm:lin_conv_subg} we only employ \eqref{eq:Pol_subd_min}, and we do not use the strong convexity assumption. On the other hand, the assumption of convexity for $g$ is needed for the notion of subgradient considered in this paper. 
\end{remark}

\section{Accelerated subgradient method}\label{sec:acc_l1}

In this section we propose a momentum-based acceleration of Algorithm~\ref{Alg_subgr_l1} for an objective function $f:\R^n\to\R$ with the $\ell_1$-composite structure introduced in \eqref{eq:compos_l1}. As observed in the Introduction, in the smooth-objective framework it is possible to design minimization schemes with momentum by discretizing second order ODEs of the form:
\begin{equation}\label{eq:diss_ODE}
\ddot x +\nabla V(x) = -A(x,t)\dot x \iff
\begin{cases}
\dot x = p\\
\dot p = -\nabla V(x) - A(x,t)\dot x,
\end{cases}
\end{equation}
where $V:\R^n\to \R$ represents the objective function, and $A(x,t)\in \R^{n\times n}$ is a positive semi-definite matrix that tunes the generalized viscosity friction. In \cite{OC} it was noticed that \textit{adaptive restart strategies} can further accelerate the convergence to the minimizer, since they are capable of eliminating the oscillations typical of under-damped mechanical systems. The term \textit{adaptive restart} denotes a procedure that resets to $0$ the momentum/velocity variable (i.e., $p$ in \eqref{eq:diss_ODE}), as soon as a suitable condition is satisfied.
In \cite{ScCF22} it was considered a \textit{conservative} dynamics by dropping the viscosity term, i.e, choosing $A(x,t)\equiv 0$ in \eqref{eq:diss_ODE}. Then, using the symplectic Euler scheme (see, e.g., \cite{Hairer}) to discretize the system, it was proposed the following \textit{conservative} algorithm:
\begin{equation}\label{eq:cons_meth_reg}
\begin{cases}
p^{k+1} = p^k -h_m \nabla V(x^k),\\
x^{k+1} = x^k +h_m p^{k+1},
\end{cases}
\end{equation}
where $h_m>0$ represents the discretization step-size.
In the case of a regular and convex objective $V$, the conservative scheme \eqref{eq:cons_meth_reg} achieves at each iteration a decrease of the function $V$ greater or equal than the classical gradient descent. This fact relies on the following restart strategy: ``reset $p_k=0$ whenever $\langle \nabla V(x^{k+1}), p^k \rangle>0$''. 
In \cite{ScCF22} it was also investigated a heuristic extension of \eqref{eq:cons_meth_reg} to the case of a non-smooth objective $f:\R^n\to\R$ with $\ell_1$-composite structure, where $\partial^- f(x^k)$ was used in \eqref{eq:cons_meth_reg} in place of $\nabla V(x^k)$, i.e., 
\begin{equation}\label{eq:cons_meth_non_smooth}
\begin{cases}
p^{k+1} = p^k -h_m \partial^- f(x^k),\\
x^{k+1} = x^k +h_m p^{k+1}.
\end{cases}
\end{equation}
In this section, taking advantage of the observations done in Section~\ref{sec:subgr_meth} for the non-accelerated subgradient method, we propose a variant of the algorithm described in \cite[Algorithm~4]{ScCF22}. The main differences concern the way we manage the changes of sign in the components, and the condition for the reset of the momentum variable. Indeed, from \eqref{eq:cons_meth_non_smooth} we deduce that
\begin{equation} \label{eq:default_step_nonsmooth}
x^{k+1} = x^k -h\partial^- f(x^k) +\sqrt h p^k,
\end{equation}
where we set $h=h_m^2$. Therefore, it is natural to divide every step of the accelerated algorithm into two phases:
\begin{itemize}
\item $q\gets x^k -h\partial^- f(x^k)$ (subgradient phase). If sign changes in the components occur, we adopt the same procedures as in Algorithm~\ref{Alg_subgr_l1}.
\item $q' \gets q + \sqrt h p^k$ (momentum phase). Also in this phase, we have particular care of sign changes of the components.
\end{itemize}
Moreover, we use the general principle that ``in the \textit{momentum phase} we do not modify null components''. This is motivated by the fact that the momentum variable carries information about the previous values of the $\partial^- f$. However, since $\partial_i^- f$ typically undergoes sudden modification when the $i$-th component of the state variable $x^k$ vanishes or changes sign, the information contained in $p^k_i$ could be of little use, if not misleading.
For this reason, in Algorithm~\ref{Alg_acc_subgr_l1} we set $p_i^k=0$ if the $i$-th component of the state variable is  null, or if it has been involved in a sign change. See, respectively, line~10 and line~17 of the accelerated subgradient method reported in Algorithm~\ref{Alg_acc_subgr_l1}.
Finally, in virtue of \eqref{eq:default_step_nonsmooth} and the remarks done above, we observe that a natural choice for the stepsize is $h=1/L$, where $L$ is the Lipschitz constant of the gradient of the regular term $g:\R^m\to\R$.

\begin{algorithm}
\caption{Accelerated conservative subgradient method for $\ell_1$-composite optimization}
\begin{algorithmic}[1]
  \scriptsize
  \STATE $x \gets x^0$, $p\gets 0\in \R^n$, $h>0$ constant step-size
  \STATE $k \gets 0$
  \WHILE{$k \leq \max_{\mathrm{iter}}$}
  \STATE $x^{\mathrm{temp}} \gets x^k - h\partial^- f(x^k)$
  \IF {$\sign(x_i^{\mathrm{temp}}\cdot x_i^k )\geq 0, \forall i =1,\ldots,n$}
  \STATE $q^{\mathrm{old}}\gets x^k$
  \STATE $q\gets \xtemp$
  \STATE $p_i \gets 0,$ if $\xtemp_i=0$
  \ELSE
  \STATE $I\gets \{ i \mid \sign(x_i^{\mathrm{temp}}\cdot x_i^k )\leq 0 \}$
  \STATE $x'_i\gets x_i^k, \quad \forall i\not\in I$
  \STATE $x'_i\gets 0, \quad \forall i\in I$
  \STATE $v_i'' \gets -h\partial_i^-f(x^k), \quad \forall i\not\in I$
  \STATE $v_i'' \gets -h\partial_i^-f(x'), \quad \forall i\in I$
  \STATE $p_i\gets 0, \quad \forall i\in I$ 
  \STATE $x''\gets x' + v''$
  \STATE $q^{\mathrm{old}}\gets x'$
  \IF{$f(x')<f(x'')$}
  \STATE $q\gets x'$
  \STATE $p\gets 0$
  \ELSE
  \STATE $q\gets x''$
  \ENDIF
  \ENDIF
  \STATE $q' \gets q + \sqrt h p$
  \IF{$\exists i=1,\ldots,n: \sign(q'_i\cdot q_i )< 0$}
  \STATE $J\gets \{ i\mid \sign(q'_i\cdot x_i )< 0 \}$
  \STATE $q'_i\gets 0,\quad \forall i\in J$
  \STATE $p\gets (q'-q)/\sqrt{h}$
  \ENDIF
  \STATE $r\gets \langle \widetilde \partial f(q'), p \rangle$
  \IF{$r\leq 0$}
  \STATE $p\gets p + (q-q^{\mathrm{old}})/\sqrt{h}$
  \ELSE
  \STATE $q'\gets q$
  \STATE $p\gets (q-q^{\mathrm{old}})/\sqrt{h}$
  \ENDIF
  \STATE $x^{k+1}\gets q'$
  \STATE $k \gets k+1$
  \ENDWHILE
\end{algorithmic}
\label{Alg_acc_subgr_l1}
\end{algorithm}

\begin{remark}
In line~31 of Algorithm~\ref{Alg_acc_subgr_l1} we have introduced the quantity $\widetilde \partial f(q')$. We recall that $f(x)=g(x)+\gamma |x|_1$, where $g$ is convex and $C^1$-regular, and $\gamma>0$. Using the same notations as in Algorithm~\ref{Alg_acc_subgr_l1}, $\widetilde \partial f(q')=(\widetilde \partial_1 f(q'),\ldots, \widetilde \partial_n f(q'))$ is defined as follows:
\begin{equation}\label{eq:def_dir_der_acc}
\widetilde \partial_i f(q'):=
\begin{cases}
\partial_i g(q') + \gamma &\mbox{if } (q_i>0) \lor (q'_i>0),\\
\partial_i g(q') + \gamma &\mbox{if } (q_i<0) \lor (q'_i<0),\\
\partial_i g(q') &\mbox{if } (q_i=0) \land (q'_i=0),
\end{cases}
\end{equation}
for every $i=1,\ldots,n$. We observe that $\widetilde \partial f(q')$ is well-defined for every component since, by construction, $\sign(q_i\cdot q'_i)\geq 0$ for every $i=1,\ldots,n$.
\end{remark}

\begin{remark}
We observe that the computation of the quantity $r$ at the line~35 requires an evaluation of the subdifferential of $f$ at the point $q'$. From a computational viewpoint, the demanding part is the evaluation of the gradient of the regular term, i.e., $\nabla g(q')$. However, if $r\leq 0$, then $x\gets q'$ (line~42), and $\nabla g(q')$ can be stored and re-used for the construction of $\partial^- f(x)$ at the subsequent iteration. 
\end{remark}

We can prove the following result on the decrease of the objective function $f$, guaranteeing that, in any circumstance, Algorithm~\ref{Alg_acc_subgr_l1} is at least as good as Algorithm~\ref{Alg_subgr_l1}.

\begin{proposition}\label{prop:dec_acc_meth}
Let $f:\R^n\to\R$ be a function such that $f(x) =g(x) +\gamma |x|_1$ for every $x\in \R^n$, where $\gamma>0$ and $g:\R^n\to\R$ is  a $C^1$ convex function such that $\nabla g$ is $L$-Lipschitz continuous, with $L>0$.
Let us consider $q^0\in \R^n$ as the initial point, and let $q'$ be the output produced by an iteration of  Algorithm~\ref{Alg_acc_subgr_l1}  and let $q$ be the output of an iteration of Algorithm~\ref{Alg_subgr_l1}  (see line 29 of  Algorithm~\ref{Alg_acc_subgr_l1}).
Then, we have that $f(q')\leq f(q)$.
\end{proposition}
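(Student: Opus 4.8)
The plan is to reduce the statement to a property of the \emph{momentum phase} alone. By construction, lines~4--28 of Algorithm~\ref{Alg_acc_subgr_l1} reproduce verbatim the subgradient phase of Algorithm~\ref{Alg_subgr_l1}, so the point $q$ available at line~29 is exactly the output of one iteration of Algorithm~\ref{Alg_subgr_l1} started at $q^0$. Hence it suffices to show that the transformation $q\mapsto q'$ carried out in lines~29--42 never increases the objective, i.e.\ $f(q')\le f(q)$. I would first dispose of the trivial branch: if $r>0$ at line~35, then line~39 resets $q'\gets q$, whence $f(q')=f(q)$ and there is nothing to prove. The substantial case is $r\le 0$, where $q'=q+\sqrt h\,p$, possibly after the sign-correction of lines~32--34 that zeroes the components in $J$ and re-sets $p=(q'-q)/\sqrt h$, so that in any case $v:=q'-q=\sqrt h\,p$.

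The heart of the argument is a convexity inequality on the segment joining $q$ and $q'$. First I would verify that no coordinate strictly changes sign along this segment, i.e.\ $\sign(q_i\cdot q_i')\ge 0$ for every $i$: this is precisely what the correction at lines~30--34 enforces. Second, I would invoke the invariant maintained by the scheme that null components are not moved in the momentum phase (the resets at lines~8, 14, 19), which gives $q_i=0\Rightarrow p_i=0\Rightarrow q_i'=0$, hence $v_i=0$ whenever $q_i=0$. With the partition $\zeta^+:=\{i:q_i>0\lor q_i'>0\}$, $\zeta^-:=\{i:q_i<0\lor q_i'<0\}$, $\zeta^0:=\{i:q_i=q_i'=0\}$, these two facts place the whole segment inside $C_{\zeta^{\pm,0}}$, where $f$ coincides with the smooth convex surrogate $\faux$ of \eqref{eq:def_f_aux}. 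Comparing \eqref{eq:grad_faux} with the definition \eqref{eq:def_dir_der_acc} of $\widetilde\partial f(q')$ shows that $\partial_i\faux(q')=\widetilde\partial_i f(q')$ on $\zeta^+\cup\zeta^-$, while on $\zeta^0$ the matching components of $v$ vanish; therefore $\langle\nabla\faux(q'),v\rangle=\langle\widetilde\partial f(q'),v\rangle=\sqrt h\,r\le 0$. Convexity of $\faux$ then closes the argument:
\[
f(q)=\faux(q)\ge \faux(q')+\langle\nabla\faux(q'),q-q'\rangle=f(q')-\langle\nabla\faux(q'),v\rangle\ge f(q').
\]

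The main obstacle is the bookkeeping that certifies the two structural claims used above: that the segment $[q,q']$ remains inside a single smoothness region $C_{\zeta^{\pm,0}}$, and that the surrogate gradient $\widetilde\partial f(q')$ employed in the restart test agrees with $\nabla\faux(q')$ exactly on the coordinates where $v\neq 0$. This identification is what converts the sign test $r\le 0$ of line~35 into the usable inequality $\langle\nabla\faux(q'),v\rangle\le 0$; once it is in place, the descent estimate follows from plain convexity and does not even require the $L$-smoothness of $g$, unlike Lemma~\ref{lem:direction_decrease}. The delicate part is to track every place where $p$ is reset or recomputed (lines~8, 14, 19, 34) so as to guarantee that the relation $v=\sqrt h\,p$ and the invariant $q_i=0\Rightarrow p_i=0$ persist up to the moment the test $r\le 0$ is evaluated.
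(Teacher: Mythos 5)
Your proposal is correct and follows essentially the same route as the paper's proof: after disposing of the trivial $r>0$ reset branch, you use the sign-consistency of $q$ and $q'$ enforced at lines~26--30 to confine the segment $[q,q']$ to a single region where $f$ agrees with a smooth convex surrogate, identify the surrogate's gradient with $\widetilde\partial f(q')$ on the coordinates where $v=\sqrt h\,p\neq 0$, and conclude by plain convexity from the test $r\leq 0$, exactly as the paper does with its function $\tilde f$. The only cosmetic differences are that you use the projected surrogate $\faux$ of \eqref{eq:def_f_aux} instead of the paper's unprojected $\tilde f$ (harmless, since $v$ and $q'$ vanish on $\zeta^0$), and that you carry the invariant $q_i=0\Rightarrow p_i=0$, which is true but not actually needed once $\zeta^0$ is defined by $q_i=q_i'=0$.
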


\begin{remark}
Under the same assumptions as Theorem~\ref{thm:lin_conv_subg}, i.e., when $g:\R^n\to\R$ is $\mu$-strongly convex, from Proposition~\ref{prop:dec_acc_meth} it follows that Algorithm~\ref{Alg_acc_subgr_l1} achieves a linear convergence rate.
Indeed, if we denote by $(x^k)_{k\geq0}$ the sequence generated by Algorithm~\ref{Alg_acc_subgr_l1} setting the step-size $h$ equal to the inverse of the Lipschitz constant of $\nabla g$, then, if we apply Proposition~\ref{prop:dec_acc_meth} with $q^0=x^k$, for every $k\geq0$ we have:
\begin{equation*}
f(x^{k+1})-f(x^*) \leq f(q)-f(x^*) \leq \kappa \left( f(x^k) -f(x^*) \right),
\end{equation*}
where $\kappa\in (0,1)$ is the constant appearing in Theorem~\ref{thm:lin_conv_subg}, and $q\in\R^n$ is the output of a single iteration of Algorithm~\ref{Alg_subgr_l1} with starting point $x^k$.
\end{remark}

\begin{proof}
Using the same notations as in Algorithm~\ref{Alg_acc_subgr_l1}, we have that $q$ is obtained from $q^0$ with an iteration of Algorithm~\ref{Alg_subgr_l1} (see line~19 and line~22 of Algorithm~\ref{Alg_acc_subgr_l1}). If $p=0$, then there is nothing to prove.
On the other hand, owing to the \texttt{if} statement at lines~26-30, we have that $\sign(q'_i\cdot q_i)\geq 0$ for every $i=1,\ldots, n$. We further observe that $q'=q+\sqrt h p$ holds in every case (see line~25 and line~29). Let us define 
\begin{align*}
\xi^+ & = \{ i=1,\ldots,n\mid (q_i>0)\lor
(q'_i>0) \},\\
\xi^- & = \{ i=1,\ldots,n\mid (q_i<0)\lor
(q'_i<0) \},\\
\xi^0 & = \{ i=1,\ldots,n\mid (q_i=0)\land
(q'_i=0) \},
\end{align*}
and the set 
\begin{equation*}
Z_{\xi^\pm,\xi^0}:=\{z\in \R^n\mid 
z_i\geq 0 \mbox{ if }i\in \xi^+,
z_i\leq 0 \mbox{ if }i\in \xi^-,
z_i= 0 \mbox{ if }i\in \xi^0
\}.
\end{equation*}
Then, we have that $q,q'\in Z_{\xi^\pm,\xi^0}$, and that the restrictions $f|_{Z_{\xi^\pm,\xi^0}}\equiv \tilde f|_{Z_{\xi^\pm,\xi^0}}$, where $\tilde f:\R^n\to\R$ is a $C^1$-regular and convex function that satisfies:
\begin{equation*}
z\mapsto \tilde f(z) = g(z) + \gamma \sum_{i\in \xi^+}z_i -\gamma \sum_{i\in \xi^-}z_i.
\end{equation*}
Moreover, from \eqref{eq:def_dir_der_acc} we read that $\nabla \tilde f (q') = \widetilde \partial f(q')$. 
Since $\tilde f$ is convex, we have that 
\begin{equation*}
\tilde f(q)\geq \tilde f(q') + \langle \nabla \tilde f(q') , -\sqrt h p \rangle
\end{equation*}
and, recalling that $f(q) = \tilde f(q)$ and $f(q') = \tilde f(q')$, it follows that the condition $\langle \nabla \tilde f(q') , p \rangle\leq 0$ implies $f(q')\leq f(q)$.\\
On the other hand, if $\langle \nabla \tilde f(q') , p \rangle > 0$, then we reset $q'=q$ (see line~35), and $f(q')= f(q)$.
\end{proof}

\section{Numerical experiments} \label{sec:num_exp}

In this section we present some numerical experiments involving composite objective functions with $\ell_1$-regularization. 
We tested Algorithm~\ref{Alg_subgr_l1} and its accelerated version Algorithm~\ref{Alg_acc_subgr_l1} on objective functions of the form $f(x) = g(x) + \gamma |x|_1$, where $g:\R^n\to\R$ is convex and regular. We considered both the strongly convex and the non-strongly case. For each class of problems, we compared the performances of our methods with ISTA, i.e., the standard \textit{forward-backward} thresholding algorithm for $\ell_1$-regularized problems (see, e.g., \cite{CW05}). In \cite{BT} an accelerated version of ISTA (called Fast ISTA, or FISTA) was proposed, and in \cite{OC} it was observed that the convergence rate of FISTA can be further improved by means of adaptive restarts. We use the restarted FISTA described in \cite{OC} as the benchmark for the experiments of this part. We also reported the performances of the conservative-restart algorithm introduced in \cite{ScCF22}. The results are illustrated in Figure~\ref{fig:experiments_l1}.

\subsubsection*{Quadratic function with $\ell_1$-regularization} We considered a function $f:\R^n\to\R$ of the form
\begin{equation*}
f(x) = \frac12 x^T M x + b^Tx + \gamma |x|_1,
\end{equation*}
where $M\in \R^{n\times n}$ is a symmetric positive definite matrix with eigenvalues sampled uniformly in the interval $[0.02,100]$, and $b\in \R^n$ was generated with a Gaussian distribution $\mathcal{N}(0,4)$. We set $\gamma = 0.25\, |b|_\infty$, and we sampled the starting point using $\mathcal{N}(0,2)$. We fixed the dimension $n=1000$.
We observe that the objective function $f$ is strongly convex, hence, in principle, it could be possible to consider optimization schemes designed for strongly convex problems. However, their efficiency relies on how sharp is the available estimate of the strong convexity constant. On the other hand, both restarted-FISTA and Algorithm~\ref{Alg_acc_subgr_l1} do not require this information. This is one of the features of restarted-FISTA highlighted in \cite{OC}.

\subsubsection*{Quadratic regression with $\ell_1$-regularization}
We considered a \textit{sparse} quadratic regression problem. We generated a sparse random vector $y\in \R^n$ whose components were non-zero with probability $p=0.3$. These values were sampled using a uniform distribution over $[0,1]$.
We took a matrix $A\in \R^{m\times n}$ whose singular values were uniformly sampled in $[1,10]$, and we set $b= Ay + w$, where $w\in \R^m$ represented a Gaussian noise distributed as $\mathcal{N}(0,0.1)$. Finally, the objective function had the form
\begin{equation*}
f(x) = \frac12 |Ax - b|_2^2 + \gamma|x|_1,
\end{equation*}
with $\gamma = 1$. We used $n=1000$ and $m=500$, and we sampled the component of the initial guess with $\mathcal{N}(0,2)$. This problem is non-strongly convex, since the matrix $M = A^T A$ has not full rank.

\subsubsection*{Logistic regression with $\ell_1$-regularization}
We considered a sparse logistic regression problem. We constructed $x^{\mathrm{real}}\in \R^n$ with the following procedure: each component was zero with probability $p=0.8$, and, if nonzero, its value was sampled using a standard normal $\mathcal{N}(0,1)$. Then, we independently sampled the entries of $b=(b_1,\ldots,b_m)\in \{ 0,1 \}^m$ using the distribution: $\mathbb{P}(b_i=1)=(1+\exp(\langle M_i, x^{\mathrm{real}} \rangle))$ for every $i=1,\ldots,m$, where $M_1,\ldots,M_m\in \R^n$ are the rows of a matrix $M\in \R^{m\times n}$ with independent components generated with $\mathcal{N}(0,1)$.
Supposing to know the matrix $M$ and the measurements $b$, the sparse log-likelyhood maximization can be formulated as the problem of minimizing 
\begin{equation*}
f(x) = g(x) + \gamma |x|_1, \quad g(x) =
\sum_{i=1}^m\Big[
(1-b_i)\langle M_i, x\rangle + \log(1+\exp(-\langle M_i, x\rangle ))
\Big],
\end{equation*}
where we set $\gamma=0.25|\nabla g(0)|_\infty$. We used $n=100$ and $m=500$, and we sampled the component of the initial guess with $\mathcal{N}(0,2)$. This problem is convex but not strongly convex.

\subsubsection*{LogSumExp with $\ell_1$-regularization}
We considered the function $f:\R^n\to\R$ defined as follows:
\begin{equation*}
f(x) = g(x) + \gamma |x|_1, \quad g(x) =
r \log \left(\sum_{i=1}^k
\exp\left( \frac{\langle M_i, x\rangle - b_i}{r} \right)\right),
\end{equation*}
where $M_1,\ldots,M_k\in \R^n$ are the rows of the matrix $M\in \R^{k\times n}$, and $b\in \R^k$. The entries of $M$ and $b$ were independently sampled using a Gaussian $\mathcal{N}(0,1)$, as well as the components of the starting point. We set $r=5$, and we used $n=200$ and $k=500$. This is another example of non-strongly convex problem.

\begin{figure}
\centering
\includegraphics[scale=0.3]{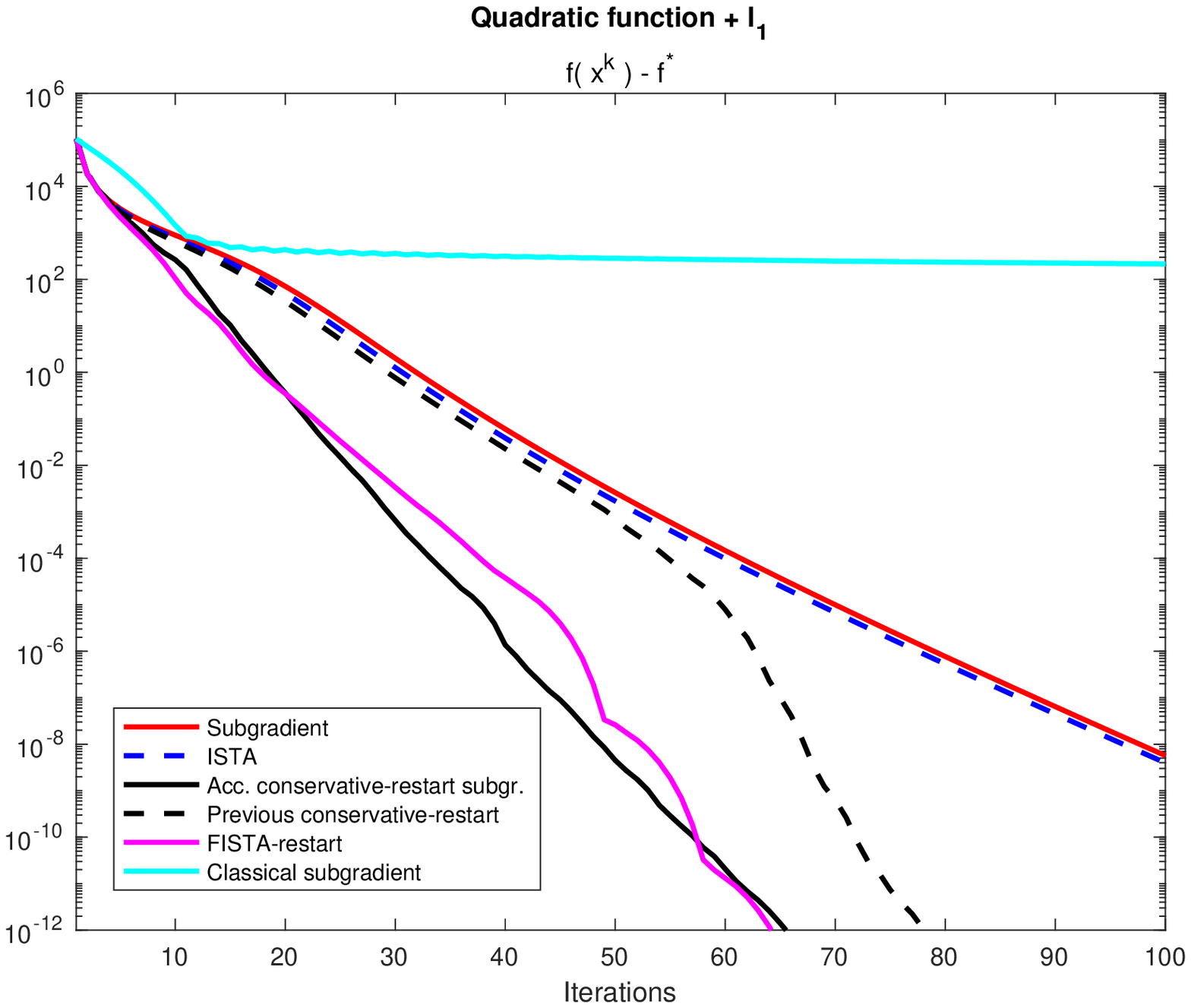}
\includegraphics[scale=0.3]{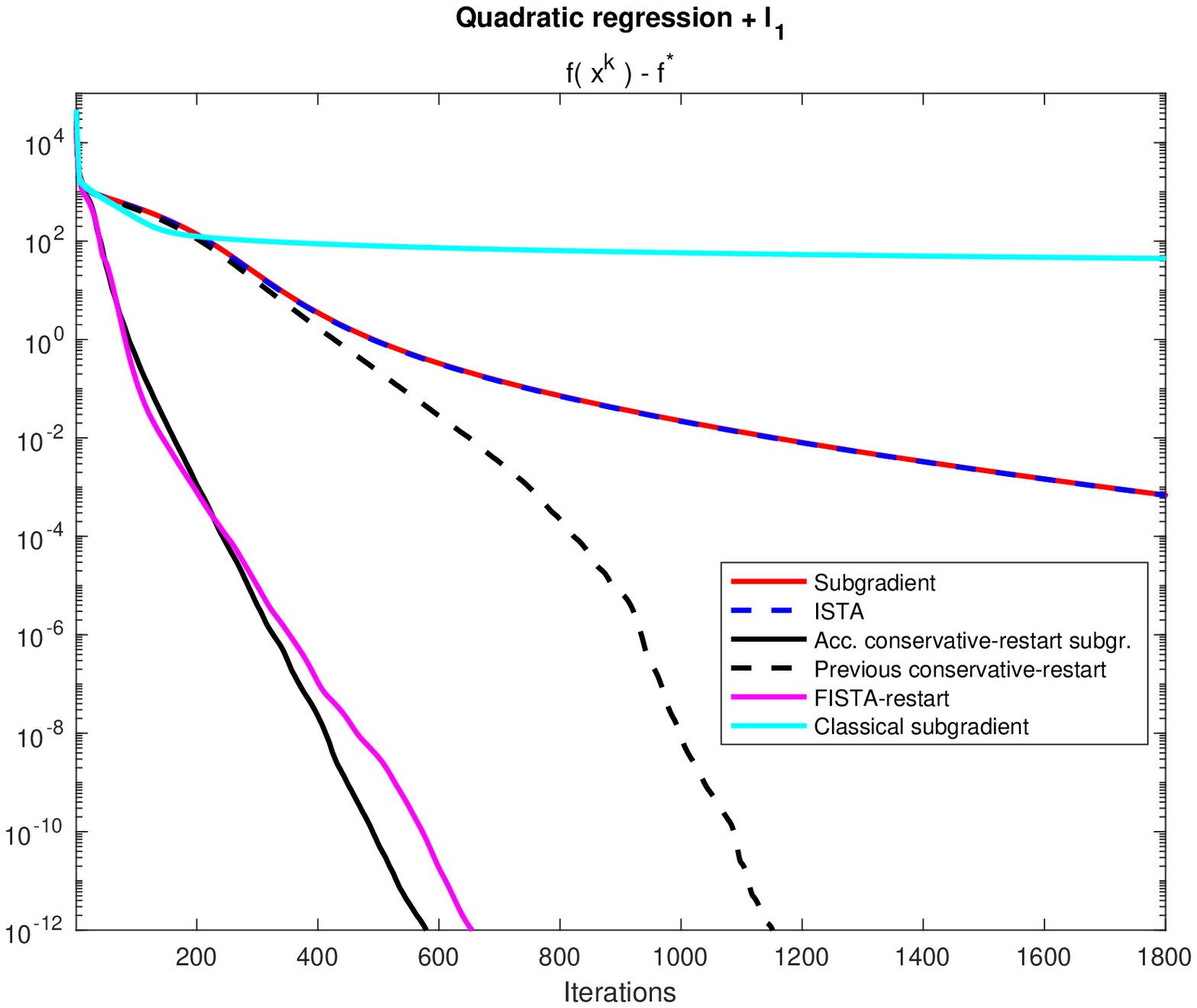}\\
\includegraphics[scale=0.3]{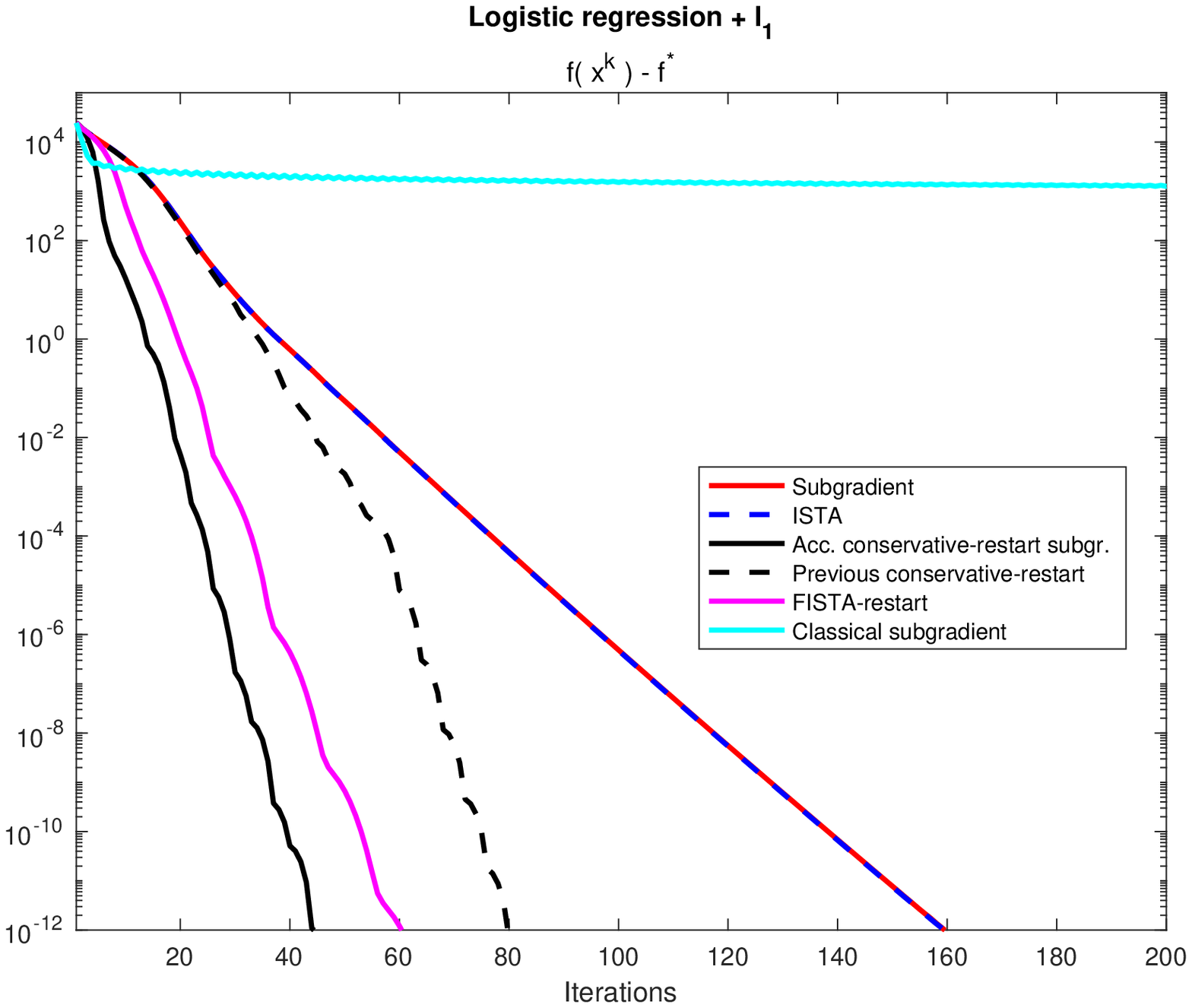}
\includegraphics[scale=0.3]{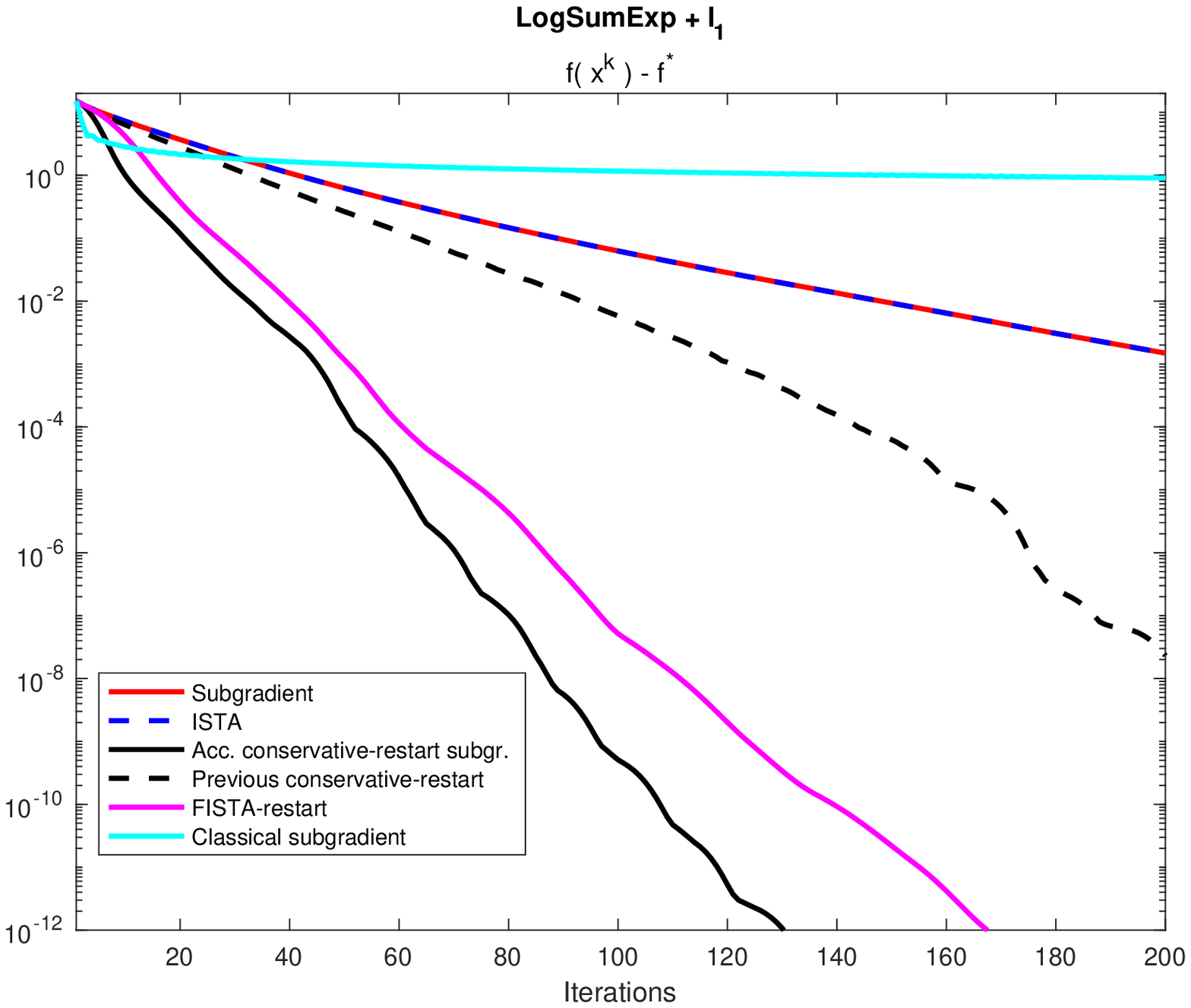}
\caption{Experiments with $\ell_1$-regularization. We compared the performances of Algorithm~\ref{Alg_subgr_l1} (red) and Algorithm~\ref{Alg_acc_subgr_l1} (black) with ISTA (dashed blue), restarted FISTA (magenta), the conservative-restart scheme proposed in \cite{ScCF22} (dashed black), and the classical subgradient method with stepsize $h_k =10k^{-1/4}$. Each problem was solved $100$ times, and the plots were obtained by taking the average. The convergence rate is measured by evaluating the gap $f(x^k)-f(x^*)$ at each iteration.
}
\label{fig:experiments_l1}
\end{figure}

\noindent
We briefly comment on the results of the experiments described above.
We observe that the non-accelerated algorithms, i.e., Algorithm~\ref{Alg_subgr_l1} and ISTA, have always very similar performances. Restarted FISTA is the most performing in the strongly convex case, while Algorithm~\ref{Alg_acc_subgr_l1} seems to be the most efficient with non-strongly convex objectives. If compared to the restart-conservative of \cite{ScCF22}, we observe that Algorithm~\ref{Alg_acc_subgr_l1} is much faster in the early phases of the minimization process. Finally, the classical subgradient method with diminishing step-size is the less performing scheme.\\

\noindent
The fact that the decays achieved Algorithm~\ref{Alg_subgr_l1} and ISTA are almost identical motivated us to construct an example where the difference in performances could be more apparent.
We considered a two-dimensional function such that $x^*=(1,0)$, and such that $\partial f(x^*) = \{0\}\times [0,\epsilon]$, for some $\epsilon>0$.
More precisely, we defined $f:\R^2 \to \R$ as
\begin{equation} \label{eq:2d_example}
f(x_1,x_2) = \frac12 \left( 
x_1^2 + 2cx_1 x_2 + 1.5x_2^2
\right) -2x_1 + (1-c)x_2 + \gamma ||x||_1,
\end{equation}
and we set $c=0.85$ and $\gamma = 1$. In this case, the  correct individuation of the fact that the second component of the minimizer is null can be challenging. This is due to the identity $\partial_2 f(x^*) = [0,\epsilon]$, or, in other words, since the vector $0$ does not lie in the relative interior of $\partial f(x^*)$. In this scenario, in the case a crossing of the set $\{ x\in \R^2: x_2 = 0 \}$ occurs, we expect that Algorithm~\ref{Alg_subgr_l1} might better decide whether the component $x_2$ should be set equal to $0$.
We used as initial guess the point $x^0=(0.95,0.5)$.
We also considered a family of problems obtained by perturbing $c$, $\gamma$  and $x^0$ with Gaussian noise
of standard deviations, respectively, equal to $0.1$, $0.1$ and $0.05$.
The results are reported in Figure~\ref{fig:experiments_example}. 
We observe that Algorithm~\ref{Alg_subgr_l1} achieves better performances than ISTA on the designed problem, and this advantage seems to be robust with respect to the perturbations introduced. 
Finally, despite using step-sizes that decay faster than in the previous experiments, the classical subgradient method exhibits evident oscillations, both in the original and in the noisy problem.

\begin{figure}
\centering
\includegraphics[scale=0.3]{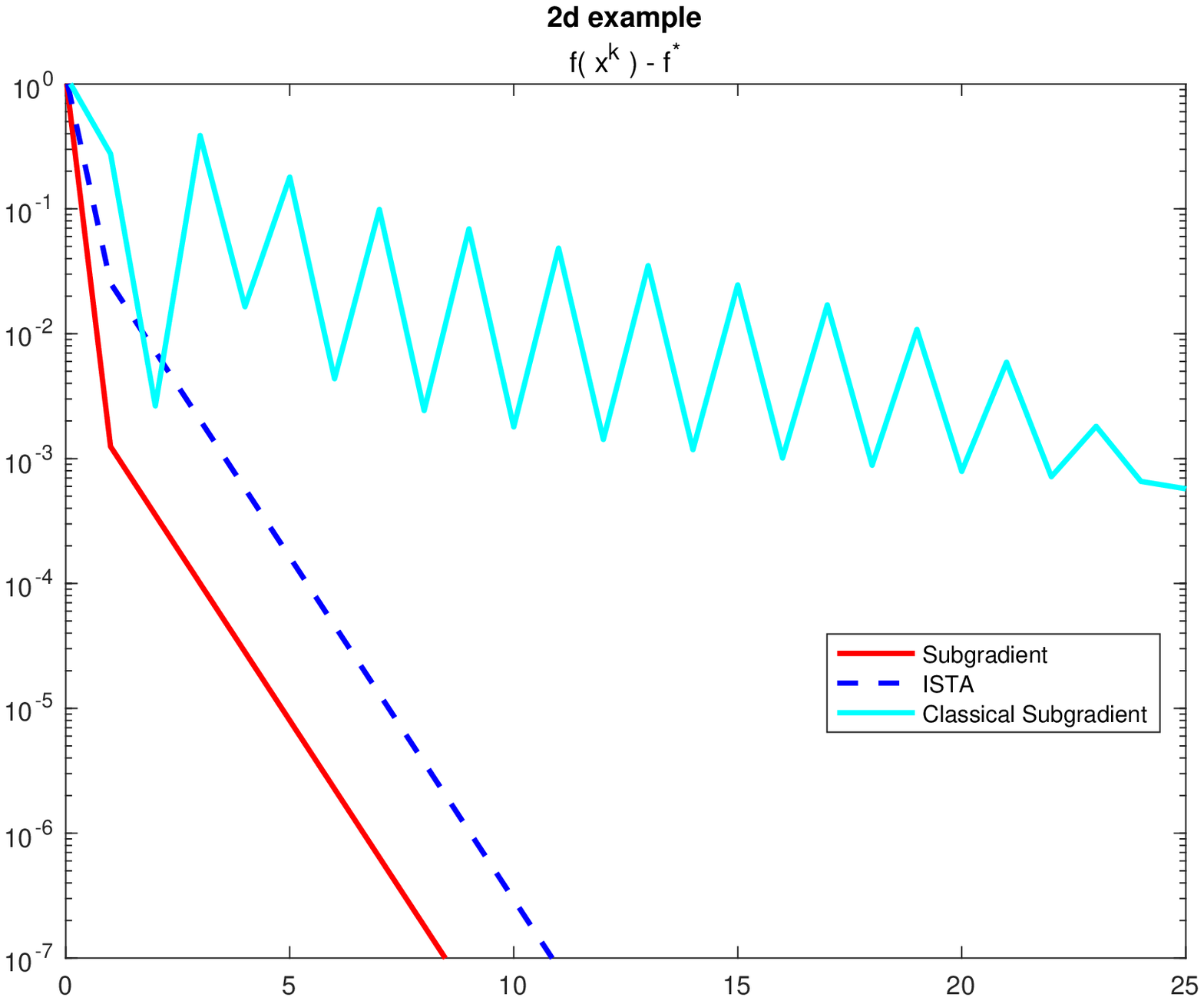}
\includegraphics[scale=0.3]{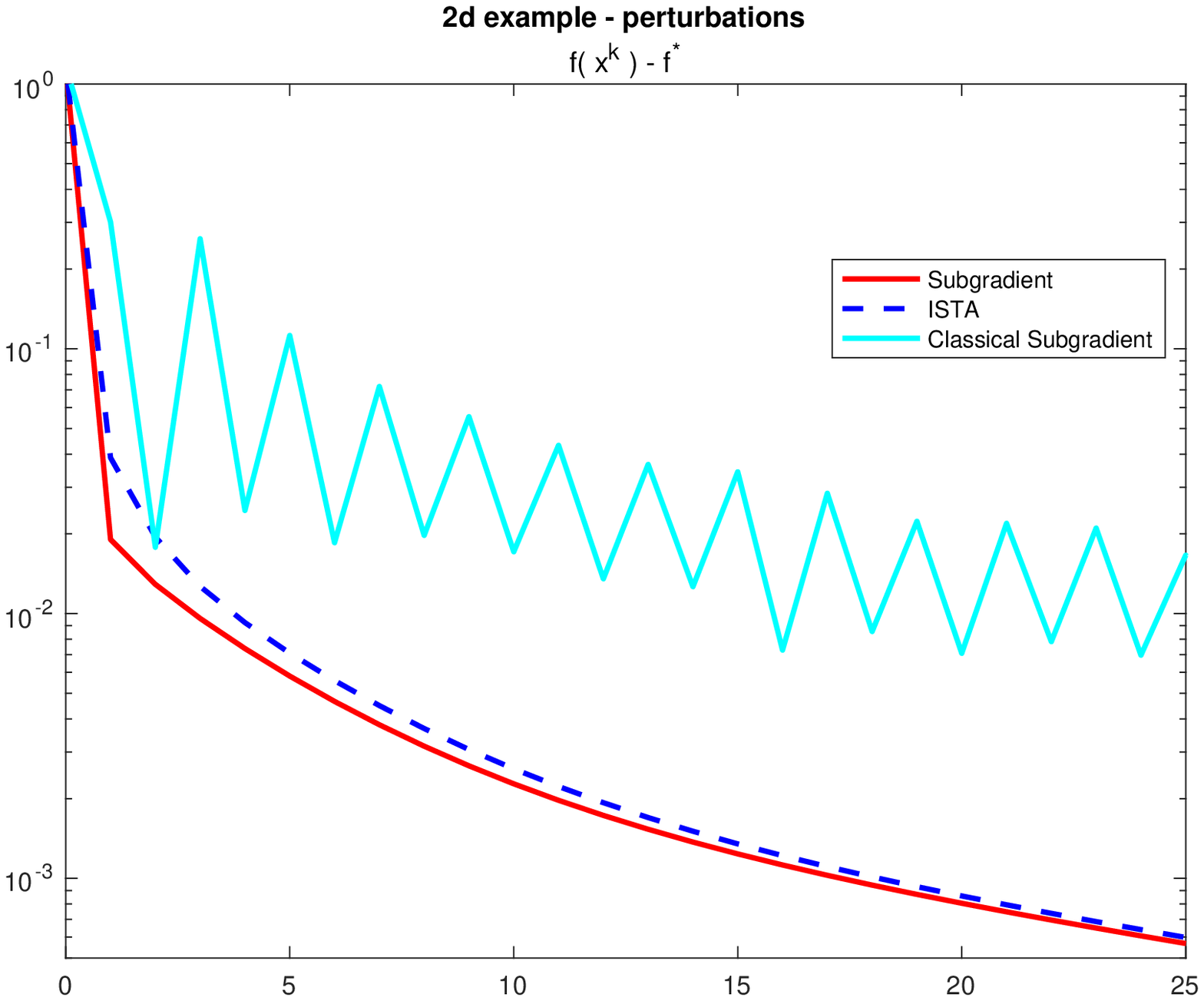}
\caption{ Comparison of 
Algorithm~\ref{Alg_subgr_l1} (red) with ISTA (dashed blue), and the classical subgradient method with stepsize $h_k =k^{-1}$.
On the left, we considered the function defined in \eqref{eq:2d_example}, while on the right we solved a perturbed problem. For the second picture, we repeated the test $100$ times, and the plots were obtained by taking the average. The convergence rate is measured by evaluating the gap $f(x^k)-f(x^*)$ at each iteration.}
\label{fig:experiments_example}
\end{figure}

\section*{Conclusions}
In this paper, we considered \textit{composite} convex optimization problems with $\ell_1$-penalization, and we formulated a subgradient algorithm with constant step-size. In the case of strongly convex objectives, we established a linear convergence result for the method.
Using dynamical system considerations, we proposed an accelerated version of the subgradient algorithm, that, at each iteration, achieves a decay of the objective always greater or equal than the decay corresponding to a step of the non-accelerated subgradient method.  
We observed in numerical experiments that the inertial  algorithm can effectively compete with one of the most performing schemes for this kind of problems, i.e., FISTA combined with an adaptive restart strategy.\\
For future work, it could be interesting to design subgradient algorithms for composite optimization involving a non-smooth term of the form $x\mapsto |Ax|_1$. In this case, a challenging point consists in finding strategies for computing $\partial^-f$ (or a suitable approximation) that could be practical for high-dimensional settings.

\subsection*{Acknowledgments}
This paper is dedicated to the beloved memory of Prof. Piero Colli Franzone.
A.S. acknowledges partial support from INdAM-GNAMPA.
A.S. wants to thank two anonymous Referees for the helpful comments that contributed to improve the quality of the paper.

\end{document}